\theoremstyle{plain}
\newtheorem{theorem}{Theorem}[section]
\newtheorem{proposition}[theorem]{Proposition}
\newtheorem{lemma}[theorem]{Lemma}
\newtheorem*{sublemma}{Sublemma}
\theoremstyle{definition}
\newcommand{\comment}[1]{}
\newcommand{\rea}{\mathbf{R}}
\newcommand{\nat}{\mathbf{N}}
\renewcommand{\P}{\ensuremath{\mathbf{P}}\xspace}
\newcommand{\intd}[1]{\,\mathrm{d}#1 \,}
\newcommand{\floor}[1]{\left\lfloor #1 \right\rfloor}
\newcommand{\ceil}[1]{\left\lceil #1 \right\rceil}
\newcommand{\hmeas}{\ensuremath{\mathcal{H}}\xspace}
\newcommand{\overbar}[1]{{\mkern 3mu\overline{\mkern-3mu#1\mkern-1mu}\mkern 1mu}}
\newcommand{\oball}[2]{\ensuremath{B{\left(#1, #2\right)}}\xspace}
\newcommand{\cball}[2]{\ensuremath{\overbar{B}{\left(#1, #2\right)}}\xspace}
\newcommand{\crect}[2]{\ensuremath{\overbar R \left(#1, #2\right)}\xspace}
\newcommand{\probs}{\ensuremath{\mathcal{P}}\xspace}
\renewcommand{\Cap}{\Capp}
\newcommand{\heis}{\ensuremath{\mathbf{H}}\xspace}
\newcommand{\dheis}{\ensuremath{d_\heis}\xspace}
\newcommand{\seq}[1]{\ensuremath{\underline{#1}}\xspace}
\newcommand\restr[2]{{
  \left.\kern-\nulldelimiterspace
  #1
  \vphantom{|}
  \right|_{#2}
  }}
\DeclareMathOperator{\supp}{supp}
\DeclareMathOperator{\dimh}{dim_H}
\DeclareMathOperator{\Capp}{Cap}
\DeclareMathOperator{\E}{\mathbf{E}}
\DeclareMathOperator{\Var}{\mathbf{Var}}
\begin{document}

\title[Limsup sets in the Heisenberg group]{Hausdorff dimension of limsup sets
  of rectangles in the Heisenberg group}

\author[F. Ekstr\"om]{Fredrik Ekstr\"om$^1$}
\address{Department of Mathematical Sciences, P.O. Box 3000,
         90014 University of Oulu, Finland$^{1,2,3}$}
\email{j.fredrik.e@gmail.com$^1$}

\author[E. J\"arvenp\"a\"a]{Esa J\"arvenp\"a\"a$^2$}
\email{esa.jarvenpaa@oulu.fi$^2$}

\author[M. J\"arvenp\"a\"a]{Maarit J\"arvenp\"a\"a$^3$}
\email{maarit.jarvenpaa@oulu.fi$^3$}

\subjclass[2010]{28A80, 60D05}

\begin{abstract}
The almost sure value of the Hausdorff dimension of limsup sets generated
by randomly distributed rect\-angles in the Heisenberg group is computed in
terms of directed singular value functions.
\end{abstract}

\maketitle

\section{Introduction}

Dimensional properties of subsets of Heisenberg groups have attained a lot
of interest recently. Due to the non-trivial relation between the Hausdorff
dimensions with respect to the Euclidean and the Heisenberg
metrics \cite{BaloghRicklyCassano03, BaloghTyson05}, one cannot directly
transfer dimensional results in Euclidean spaces into Heisenberg groups.
Indeed, it turns out that some theorems concerning dimensions have a special
flavour or even an essentially different form in the Heisenberg setting.
These include, for example, dimensional properties of self-affine sets,
projections and slices.

In the Heisenberg group Hausdorff dimensions of self-similar and self-affine
sets have been studied e.g.~in
\cite{BBMT10,BaloghTyson05,BaloghTysonWarhurst09,ChousionisTysonUrbanski19}.
Even though the class of affine iterated function systems is quite
restrictive\,---\,every such system is a horizontal lift of an affine iterated
function system on the plane\,---\,the dimension calculations involve some
subtleties. The behaviour of the Hausdorff dimension under
projections and slicing transpires to be interesting, see
\cite{BD-CFMT13,BFMT12,FasslerHovila16,Hovila14}. There are two kinds of natural
projections (and slices) in Heisenberg groups\,---\,the horizontal
and vertical ones. The vertical projections possess an exceptional
feature:~they are not Lipschitz continuous. This indicates that the
methods developed in the Euclidean setting cannot be utilised. For
related questions concerning Sobolev maps and the foliations generated by
the horizontal subspaces, see
\cite{BaloghTysonWildrick17}.

In this paper, we initiate a new direction of research in Heisenberg groups by
investigating dimensions of limsup sets generated by rectangles.
Let $X$ be a space and let $(A_n)$ be a sequence of subsets of $X$.
The limsup set generated by the sequence $(A_n)$ consists of those points of
$X$ which are covered by infinitely many of the sets $A_n$, that is, 
        \[
        \limsup_n A_n = \bigcap_{n=1}^\infty \bigcup_{k=n}^\infty A_k.
        \]
Limsup sets are encountered in many fields of
mathematics\,---\,one of the earliest appearances being the Borel-Cantelli
lemma \cite{Borel97,Cantelli17}. They play a central role in the study of
Besicovitch-Eggleston sets concerning the $k$-adic expansions of real numbers
\cite{Besicovitch34,Eggleston49} as well as in Diophantine
approximation \cite{Jarnik32,Khintchine26}. For more information on different
aspects of limsup sets, we refer to \cite{FJJSUP} and the references therein.
For recent results regarding Diophantine approximation in Heisenberg
groups, see \cite{SeuretVigneron17,Vandehey16,ZhengUP}. 

Dimensional properties of random limsup sets have been actively studied, see
for example \cite{Durand10,EkstromPerssonUP,FanSchmelingTroubetzkoy13,
  FanWu04,FJJSUP,JJKLS14,JJKLSX17,Persson15,PerssonUP,SeuretUP}. Combining the
results of these papers, the almost sure value of dimension of random limsup
sets is known in the following cases:
\begin{itemize}
\item[-]
the underlying space $X$ is a Riemann manifold, the generating sets
$(A_n)$ are Lebesgue measurable with positive density and the driving measure
determining the randomness is not singular with respect to the Lebesgue measure,

\item[-]
$X$ is the Euclidean or the symbolic space, the generating sets $(A_n)$ are
balls and the driving measure has special properties like being a Gibbs measure,

\item[-]
$X$ is an Ahlfors regular
metric space, randomness is given by the natural measure and $(A_n)$ is
a sequence of balls.
\end{itemize}

In \cite{EJJS18} a dimension formula for limsup sets generated by rectangles in
products of Ahlfors regular metric spaces is derived. In this paper, we
address the problem of determining the Hausdorff dimension of
random limsup sets generated by rectangles in the first Heisenberg group 
(see Theorem~\ref{mainthm} below). In \cite{EJJS18} the Lipschitz
continuity of projections is utilised to a great extent, and because of that,
the same methods cannot be used in our setting. Instead, we will extend
some results known in the Euclidean setting to unimodular groups or to
compact metric spaces, and make calculations specific to the Heisenberg
group to complete the argument.

We proceed by introducing our notation.
The \emph{Heisenberg group} \heis is the set $\rea^3$ with the non-commutative
group operation
	\[
	pp' = (x + x', y + y', z + z' + 2(x y' - y x')),
	\]
where $p = (x, y, z)$ and $p' = (x', y', z')$. The unit in \heis is $(0, 0, 0)$
and the inverse of $p$ is $p^{-1} = (-x, -y, -z)$. There is a norm on
\heis given by
	\[
	\| p \| = \left(\left(x^2 + y^2\right)^2 + z^2 \right)^{1 / 4},
	\]
which gives rise to a left-invariant metric
	\[
	\dheis(p, p') = \| p^{-1} p' \|
	= \left(\left((x' - x)^2 + (y' - y)^2\right)^2 +
	(z' - z - 2(xy' - yx'))^2 \right)^{1 / 4}.
	\]

Both left and right translation in \heis move vertical lines to
vertical lines in such a way that the Euclidean distance between lines is
preserved, and the image of the Lebesgue measure on a vertical
line under translation is the Lebesgue measure on the image line.
Thus Fubini's theorem implies that the Lebesgue measure on $\rea^3$
is invariant under translations in \heis. It is easy to see that the
Lebesgue measure of $\oball 0 r$ in \heis is proportional
to $r^4$, and by translation invariance the same is true
for every ball of radius $r$. In particular, the metric space
$(\heis, \dheis)$ has Hausdorff dimension $4$.

Let 
        \[
        L(0) = \left\{p'; \, x' = y' = 0 \right\} \text{ and }
        H(0) = \left\{p'; \, z' = 0 \right\}
        \]
be the vertical line and horizontal plane through the origin, respectively.
For $p\in\heis$, define
        \[
        L(p) = pL(0)\text{ and }H(p) = pH(0).
        \]
Then $L(p)$ is the vertical line through $p$ and
	\[
	H(p) = \left\{p'; \, z' = z + 2(xy' - yx') \right\}
	\]
is the plane through $p$ that has slope $0$ in the direction $(x, y)$
and slope \mbox{$2(x^2 + y^2)^{1 / 2}$} in the orthogonal direction $(-y, x)$.
Note that
	\[
	\dheis(p, p') \geq
	\left((x' - x)^2 + (y' - y)^2\right)^{1 / 2},
	\]
with equality if and only if $p' \in H(p)$. It follows that the
distance to $L(p')$ from any point on $L(p)$ equals
the Euclidean distance from $(x, y)$ to $(x', y')$, and vice versa
by symmetry. Thus vertical lines are parallel in the Heisenberg metric,
and the distance between them is the same as the Euclidean distance.
The symmetry of the metric implies that $p' \in H(p)$ if and only if
$p \in H(p')$ and, by definition, 
$pH(p') = H(pp')$. If $p' \in L(p)$ then $H(p')$ is parallel in the
Euclidean sense to $H(p)$.

A closed \emph{rectangle} in \heis \emph{centred at $0$} is
        \[
	\crect 0 r =
	\left\{
	p'; \, x'^2 + y'^2 \leq r_1^2, \, |z'| \leq r_2^2
	\right\},
	\]
and, in general, a closed \emph{rectangle centred at $p$} is a set of the form
        \[
        \crect p r = p \crect 0 r =
        \left\{
	p'; \, (x' - x)^2 + (y' - y)^2 \leq r_1^2, \,
	|z' - z - 2(xy' - yx')| \leq r_2^2
	\right\},
        \]
where $r = (r_1, r_2)$.
This is the set of points that can be reached
from $p$ by moving ``horizontally'' in $H(p)$ a distance at most $r_1$
and then vertically a distance at most $r_2$, or by moving first
vertically a distance at most $r_2$ and then horizontally a distance
at most $r_1$.

Let $\seq p = (p_n)$ be a sequence of points in \heis and let $\seq r = (r_n)$
be a bounded sequence of pairs of positive numbers, and define
	\[
	E_{\seq r}(\seq p) = \limsup_n \crect{p_n}{r_n}.
	\]
The purpose of this article is to give a formula for the Hausdorff dimension
of such a set when the centres of the rectangles are chosen randomly.
Let $\lambda$ be the Lebesgue measure on \heis and let $W$ be a bounded open
subset of \heis. Let $\lambda_W = \lambda(W)^{-1} \restr\lambda W$ and define
the probability space $(\Omega, \P)$ by $\Omega = \heis^\nat$ and
$\P = \lambda_W^\nat$.
Then $\omega \mapsto E_{\seq r}(\omega)$ can be considered as a random set
defined on $(\Omega, \P)$.
The \emph{directed singular value function} is defined as follows:~for
$r = (r_1, r_2)$, if $r_1 \leq r_2$ let
	\[
	\Phi^t(r) =
	\begin{cases}
	r_2^t & \text{if } t \in [0, 2],\\
	r_1^{t - 2} r_2^2& \text{if } t \in [2, 4],
	\end{cases}
	\]
and if $r_1 \geq r_2$ let
	\[
	\Phi^t(r) =
	\begin{cases}
	r_1^t & \text{if } t \in [0, 3], \\
	r_1^{6 - t} r_2^{2(t - 3)} & \text{if } t \in [3, 4].
	\end{cases}
	\]
The minimum of numbers $a,b\in\rea$ is denoted by $a\wedge b$.
        
\begin{theorem}	\label{mainthm}
With the above notation,
	\[
	\dimh E_{\seq r} = \inf\left\{
	t; \, \sum_n \Phi^t(r_n) < \infty
	\right\} \wedge 4
	\]
$\P$-almost surely.        
\end{theorem}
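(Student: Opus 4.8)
The proof splits, as usual for such results, into an upper bound and a lower bound for $\dimh E_{\seq r}$, and the critical exponent $s_0 = \inf\{t; \sum_n \Phi^t(r_n) < \infty\}$ is the meeting point. The plan is to first reduce to the case $s_0 \le 4$ (otherwise there is nothing to prove, since $\dimh \heis = 4$) and to a bounded region by restricting to a fixed large ball, so that only finitely many distinct ``shapes'' of rectangles matter up to dyadic scale. The key geometric input is that a rectangle $\crect p r$ with $r = (r_1, r_2)$ can be efficiently covered by Heisenberg balls: if $r_1 \le r_2$ one covers it by roughly $(r_2/r_1)^2 \cdot 1$ balls of radius $r_1$ (after first chopping the vertical extent $r_2^2$ into pieces of height $r_1^2$, each piece being comparable to a ball of radius $r_1$), while if $r_1 \ge r_2$ one covers $\crect p r$ by roughly $r_1^2/r_2^2$ balls of radius $\sqrt{r_1 r_2}$ wait—more carefully, by $(r_1/r_2)^2$ balls of radius $r_2$ is wasteful; the efficient covering uses balls of an intermediate radius, and checking that the resulting covering sum is exactly $\Phi^t(r)$ up to constants is the computation that produces the two-case formula. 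I would record this as a lemma: for each $t \in [0,4]$ and each $\rho \le \min(r_1,r_2^2,\dots)$ appropriately, $\crect p r$ is covered by $\lesssim \Phi^t(r)\rho^{-t}$ balls of radius $\rho$.

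Given that covering lemma, the \textbf{upper bound} $\dimh E_{\seq r} \le s_0$ is a routine Borel--Cantelli argument: for $t > s_0$, the tails $\bigcup_{k \ge n}\crect{p_k}{r_k}$ are covered, at a common small scale, by a family of balls whose $t$-dimensional sum is $\lesssim \sum_{k \ge n}\Phi^t(r_k) \to 0$, so $\hmeas^t(E_{\seq r}) = 0$, deterministically and hence almost surely. This direction needs no randomness at all.

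The \textbf{lower bound} $\dimh E_{\seq r} \ge s_0$ is the substantial part and is where the randomness enters. The standard strategy is the energy/mass-distribution method: for $t < s_0$ one has $\sum_n \Phi^t(r_n) = \infty$, and one wants to build, almost surely, a probability measure $\mu$ on $E_{\seq r}$ with finite $t'$-energy for every $t' < t$ (or a suitable Frostman-type bound). The route I would take is to adapt the general machinery the authors allude to (``extend results known in the Euclidean setting to unimodular groups or compact metric spaces''): one fixes a large ball $B$, works inside it, and for the random rectangles landing in $B$ one sets up a first-moment/second-moment computation. The first moment of $\sum_{n \le N}\iver{p_n \in \crect{\,\cdot\,}{r_n}}$-type quantities is controlled because $\lambda_W$ is comparable to Lebesgue measure and $\lambda(\crect p r)$ is comparable to $r_1^2 r_2^2$ (a genuinely Heisenberg computation, using translation invariance of $\lambda$). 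The second moment requires estimating $\lambda(\crect p r \cap \crect{p'}{r'})$ in terms of $\dheis(p,p')$, and this is the step I expect to be the main obstacle: unlike the Euclidean/product case in \cite{EJJS18}, one cannot use Lipschitz projections, so one must directly analyze how two Heisenberg rectangles intersect, using the structure of the planes $H(p)$ (parallel vertical lines, the slope-$2(x^2+y^2)^{1/2}$ behaviour) described in the introduction. The output of this analysis should be a bound of the form $\lambda(\crect p r \cap \cball{p'}{\rho}) \lesssim \Phi^t(r)\,\rho^{?}\cdots$ matching the singular value function, which then feeds into a Paley--Zygmund / second-moment argument on a net of scales to show that with positive (hence, by a $0$--$1$ law, full) probability the limsup set carries a measure witnessing dimension $\ge t$, for every $t < s_0$.

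A few remarks on organization: I would isolate (i) the Heisenberg-specific covering lemma for rectangles, (ii) the Heisenberg-specific intersection estimate $\lambda(\crect p r \cap \cball{q}{\rho})$, and (iii) an abstract limsup lower-bound proposition valid in, say, Ahlfors regular or unimodular groups, so that the Heisenberg content is concentrated in (i) and (ii). The passage from ``positive probability'' to ``almost surely'' uses a Kolmogorov $0$--$1$ law argument together with the fact that the almost sure value of $\dimh E_{\seq r}$ is a tail event (changing finitely many $p_n$ does not change the limsup set). Finally one checks the two halves agree at $s_0$, giving the exact formula $\dimh E_{\seq r} = s_0 \wedge 4$ almost surely.
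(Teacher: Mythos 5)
Your architecture is the paper's: an upper bound by covering rectangles efficiently (giving $\hmeas^t_\infty(\crect{p}{r}) \lesssim \Phi^t(r)$ and a deterministic tail argument), and a lower bound by showing $\Cap_t(\crect{p}{r}) \gtrsim \Phi^t(r)$ and feeding this into an abstract second-moment limsup theorem for unimodular $(c,d)$-regular groups. The abstract part of your plan is viable (the paper's Theorem~\ref{auxthm} is essentially your item (iii), proved with Chebyshev/Borel--Cantelli second-moment estimates; your alternative ending via positive probability plus Kolmogorov's $0$--$1$ law would also work, since $E_{\seq r}(\omega)$ is unchanged by altering finitely many $\omega_n$).

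The genuine gap is that both Heisenberg-specific estimates --- the only places where the function $\Phi^t$ and its two-case, four-exponent form can actually come from --- are asserted rather than derived, and your sketch of them misses the structural point that makes them work. For the covering in the case $r_1 \geq r_2$, $t \in (3,4)$, the efficient scale is $r_2^2/r_1$ (not $\sqrt{r_1 r_2}$), and more importantly the reason a covering at that scale (or the paper's multi-scale version) is efficient is that a ball of radius $a$ centred at Euclidean distance $\rho$ from the vertical axis is essentially a slab of vertical thickness $a^2$ tilted with slope $2\rho$ along the plane $H(p)$; hence how much of the rectangle a ball can cover, and dually the volume $\lambda(\crect{0}{r} \cap \oball{p}{a})$ entering the energy integral, depends on $\rho$ and not only on $a$ and $r$. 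The paper implements this by decomposing the rectangle into annuli $A_\rho$ about the axis, covering the annulus at distance $\rho$ by $\sim \rho^4/r_2^4$ balls of radius $\sim r_2^2/\rho$, and, for the energy, by proving the $\rho$-dependent bound $\lambda(A \cap B(a)) \lesssim \min(a^4,\, r_2^2 a^3/\rho,\, r_1 r_2^2 a,\, r_1^2 r_2^2)$ and then integrating $\sup_{z_0} R_t(\rho,0,z_0)$ against $\rho\,\mathrm d\rho$. Without this axis-distance dependence you cannot produce the exponents $r_1^{6-t} r_2^{2(t-3)}$ (content, $t\in[3,4]$) and $r_1^{t-2} r_2^{2(5-t)}$ (energy, $t\in(3,4)$), which are precisely the non-Euclidean content of the theorem; your proposal explicitly defers this computation, so as written it does not establish Proposition~\ref{phiestimate} and hence not the theorem. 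A smaller imprecision: your covering claim cannot hold for all small $\rho$ (a volume count forces $N(\rho)\rho^t \gtrsim r_1^2 r_2^2 \rho^{t-4} \to \infty$), only at the one critical scale, though for Hausdorff content one good scale suffices.
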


Let $X$ be a metric space. The $t$-dimensional \emph{Hausdorff content}
of a subset $A$ of $X$ is defined by
	\[
	\hmeas_\infty^t(A) = \left\{
	\sum_n |C_n|^t; \, \{C_n\} \text{ is a cover of $A$}
	\right\},
	\]
where the diameter of a set $C$ is denoted by $|C|$.
If $\mu$ is a Borel measure on $X$ and $t > 0$, the \emph{$t$-energy} of
$\mu$ is defined by
	\[
	I_t(\mu) = \iint d(x, y)^{-t} \intd{\mu(y)} \intd{\mu(x)}.
	\]
The \emph{$t$-capacity} of a Borel subset $A$ of $X$ is defined by
	\[
	\Cap_t(A) = \sup\left\{
	\frac{1}{I_t(\mu)}; \, \mu \in \probs(A)
	\right\},
	\]
where $\probs(A)$ denotes the set of Borel probability measures on $X$
that give full measure to $A$. It can be shown that
\begin{equation}\label{caplesscontent}
  \Cap_t(A) \leq \hmeas_\infty^t(A)
\end{equation}
always holds (see \cite[Remark 3.3]{FJJSUP}).

The proof of Theorem~\ref{mainthm} is based on estimating the
Hausdorff content and capacity of rectangles in the Heisenberg
group. The following proposition is proved in
Section~\ref{contentcapacitysection}.

\begin{proposition}\label{phiestimate}
For all $t\in (0,4)\setminus\{1,2,3\}$, there is a constant $c_t$ such that
	\[
	c_t^{-1} \hmeas_\infty^t\left(\crect x r\right)
	\leq \Phi^t(r) \leq
	c_t \Cap_t\left(\crect x r \right)
	\]
for every $x$ and $r$.
\end{proposition}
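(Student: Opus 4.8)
The plan is to establish the two inequalities separately and to exploit the translation-invariance of the Heisenberg metric and of the Lebesgue measure so that we may assume $x = 0$ throughout, reducing everything to estimates for $\crect 0 r$ with $r = (r_1, r_2)$. By a further rescaling argument — using the Heisenberg dilations $\delta_s(x,y,z) = (sx, sy, s^2 z)$, which scale $\dheis$ by $s$ and send $\crect 0 {(r_1,r_2)}$ to $\crect 0 {(sr_1, sr_2)}$ — we may normalise so that $\max(r_1, r_2) = 1$ (or handle the two regimes $r_1 \le r_2$ and $r_1 \ge r_2$ separately after setting the larger coordinate to $1$). Thus the whole proposition comes down to a finite list of concrete geometric estimates, and the cases $t \in (0,1)$, $(1,2)$, $(2,3)$, $(3,4)$ will be treated individually, matching the piecewise definition of $\Phi^t$.

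For the upper bound $\hmeas_\infty^t(\crect 0 r) \le c_t \Phi^t(r)$, I would produce an explicit economical cover of $\crect 0 r$ by Heisenberg balls. When $r_1 \le r_2$, the rectangle has ``horizontal'' extent $r_1$ and ``vertical'' extent $r_2^2$ in the $z$-coordinate; since a Heisenberg ball of radius $\rho$ has horizontal extent $\rho$ and vertical extent $\rho^2$, one covers $\crect 0 r$ first by one ball of radius $\sim r_2$ (capturing the vertical direction, at the cost of horizontal overshoot), which gives the bound $r_2^t$ valid when this single ball is efficient, i.e. for small $t$; for larger $t$ one instead stacks $\sim r_2^2/r_1^2$ balls of radius $\sim r_1$ along the vertical fibre, yielding $(r_2/r_1)^2 r_1^t = r_1^{t-2} r_2^2$. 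The regime $r_1 \ge r_2$ is handled symmetrically: a single ball of radius $\sim r_1$ gives $r_1^t$, while tiling the horizontal disc of radius $r_1$ by $\sim (r_1/r_2)^2$ balls of radius $\sim r_2$ gives $(r_1/r_2)^2 r_2^t = r_1^2 r_2^{t-2}$; one checks that the optimum of these two covers over the stated $t$-intervals reproduces $\Phi^t(r)$ up to a constant. (The thresholds $t=2$ and $t=3$ are exactly where the two covers cross.) Choosing in each range the cheaper of the two covers and comparing diameters gives the claim.

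For the lower bound $\Phi^t(r) \le c_t \Cap_t(\crect 0 r)$, by the definition of capacity it suffices to exhibit, for each $r$ and each relevant $t$, a Borel probability measure $\mu$ supported on $\crect 0 r$ with $I_t(\mu) \le c_t \Phi^t(r)^{-1}$. The natural candidate is (a normalised restriction of) Lebesgue measure on the rectangle, or a product-type measure that is Lebesgue in the horizontal disc of radius $r_1$ and Lebesgue in the vertical segment of length $r_2^2$. The energy integral $\iint \dheis(p,p')^{-t}\,d\mu(p')\,d\mu(p)$ is then estimated by a dyadic decomposition: bound $\mu(\crect p \rho)$ — or rather $\mu$ of a Heisenberg ball of radius $\rho$ centred at a typical point of the rectangle — and sum $\sum_j 2^{jt}\mu(\text{shell at scale } 2^{-j})$ over the dyadic scales between the two side-lengths of the rectangle. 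The key computation is the $\mu$-measure of a ball of radius $\rho$ inside the rectangle, which behaves like $\min(\rho, r_1)^2 \cdot \min(\rho^2, r_2^2)/(\text{total mass normaliser})$; carrying out the sum over $\rho$ ranging over the scales present in $\crect 0 r$ and checking that the dominant term matches $\Phi^t(r)^{-1}$ in each $t$-interval completes the argument. Here one must be slightly careful because the Heisenberg metric is not a product metric — the inequality $\dheis(p,p')\ge((x-x')^2+(y-y')^2)^{1/2}$ with equality on $H(p)$, noted in the text, gives control of the horizontal part, and the vertical contribution is handled using that moving vertically by $z$ costs Heisenberg distance $|z|^{1/2}$.

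The main obstacle I expect is the lower bound, specifically getting the measure of small Heisenberg balls inside the rectangle correct: because balls are ``tilted'' relative to the coordinate rectangle (a ball centred at $p$ is flat along the plane $H(p)$, whose slope depends on $p$), the quantity $\mu(\oball p \rho)$ is not simply a product of one-dimensional factors, and one has to verify that the tilting does not destroy the expected scaling $\rho^2 \cdot \min(\rho^2,r_2^2)$ uniformly over centres $p \in \crect 0 r$. Exploiting translation-invariance — replacing $\mu$ by its left-translate to recentre at the origin and using that $\crect p r$ is itself a left-translate of $\crect 0 r$ — should neutralise this, but making the estimate uniform and handling the borderline interaction between the horizontal radius $r_1$ and the vertical half-height $r_2^2$ near the scales where they are comparable is where the real work lies. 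The exclusion of $t \in \{1,2,3\}$ is a convenience so that in each open interval exactly one of the two competing covers (resp. one term in the energy sum) strictly dominates, avoiding logarithmic corrections at the crossover exponents.
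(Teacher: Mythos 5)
Your overall architecture (reduce to $x=0$ by left-invariance, prove the content upper bound by explicit covers, prove the capacity lower bound via $\Cap_t(R)\geq \lambda(R)^2/I_t(R)$ with $R=\crect 0 r$ and a shell decomposition of the energy) is the same as the paper's, and your treatment of the regime $r_1\leq r_2$ is essentially correct. But in the regime $r_1\geq r_2$ both halves of your argument have a genuine gap, and both gaps have the same source: you treat the Heisenberg metric on the rectangle as if it were a product of a horizontal and a vertical metric, whereas the tilt of the planes $H(p)$ for $p$ off the vertical axis is the decisive phenomenon. For the content with $t\in(3,4)$: your proposed cover by $\sim(r_1/r_2)^2$ balls of radius $r_2$ centred on a Euclidean $r_2$-net of the disc is \emph{not} a cover. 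A ball of radius $a$ centred at horizontal distance $\rho$ from the axis stays within vertical distance $a^2$ of the plane $H(p)$, which has slope $2\rho$ in the tangential direction, so its intersection with the slab $|z'|\leq r_2^2$ has tangential width only $\sim(r_2^2+a^2)/\rho$. Moreover, the count your tiling would give, $(r_1/r_2)^2r_2^t=r_1^2r_2^{t-2}$, is \emph{strictly smaller} than $\Phi^t(r)=r_1^{6-t}r_2^{2(t-3)}$ by the factor $(r_2/r_1)^{4-t}$; if such a cover existed it would contradict the capacity half of the proposition. (Relatedly, your claim that the two covers cross at $t=3$ is false: for $r_1\geq r_2$ they cross at $t=2$, and neither reproduces $\Phi^t$ on $(3,4)$.) The paper's cover is multi-scale: decompose $R$ into annuli $A_\rho$ of radial width $r_2^2/\rho$ and full height, note that the circle $C_\rho$ at height $0$ is $\sqrt2\,r_2^2/\rho$-dense in $A_\rho$ precisely because the slope $2\rho$ lets a ball of radius $r_2^2/\rho$ centred on $C_\rho$ sweep the whole vertical extent $2r_2^2$, and sum $(\rho^4/r_2^4)\cdot(r_2^2/\rho)^t$ over $\rho=r_2\sqrt k$, $k\lesssim(r_1/r_2)^2$.

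For the capacity with $r_1\geq r_2$ and $t\in(2,4)$: your ball-measure bound $\lambda(\oball p a\cap R)\lesssim\min(a,r_1)^2\min(a^2,r_2^2)$ is a correct upper bound but not sharp, and it is not sharp enough. Feeding it into the shell sum gives $I_t(R)\lesssim r_1^2r_2^{6-t}$ and hence only $\Cap_t(R)\gtrsim r_1^2r_2^{t-2}$, which falls short of $\Phi^t(r)$ ($=r_1^t$ on $(2,3)$ and $=r_1^{6-t}r_2^{2(t-3)}$ on $(3,4)$) by a positive power of $r_2/r_1$. Your stated worry is backwards: the tilting does not threaten the product scaling, it \emph{improves} on it, and the improvement must be used. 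For a centre at horizontal distance $\rho$ the tangential width $(r_2^2+a^2)/\rho$ above yields the extra bound $\lambda(A\cap B(a))\lesssim r_2^2a^3/\rho$ (the paper's second sublemma); the pointwise energy $R_t(p)$ then genuinely depends on $\rho$, and one must finish with the integration $\int_0^{r_1}g_t(\rho)\,\rho\,\mathrm{d}\rho$. Neither the $\rho$-dependent ball estimate nor this final radial integration appears in your sketch, so the cases $t\in(2,4)$, $r_1\geq r_2$ of the capacity bound are not established. (Your dilation normalisation is harmless but unnecessary; the paper works with general $r$ directly.)
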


This immediately implies that if $\sum_n \Phi^t(r_n) < \infty$
then $\hmeas^t(E_{\seq r}(\omega)) = 0$ for every $\omega$, since
every tail of the sequence of rectangles is a cover of $E_{\seq r}(\omega)$.
The almost sure lower bound for $\dimh E_{\seq r}$ then follows
from the estimate of the capacity of a rectangle together with
Theorem~\ref{auxthm} below.

A Borel measure $\mu$ on a metric space $X$ is \emph{$(c, d)$-regular}
if
	\[
	c^{-1} r^d \leq \mu\left(\oball x r\right) \leq c r^d
	\]
for every $x \in X$ and $r \in [0, |X|]$. A locally compact group $G$ is
\emph{unimodular} if the left invariant Haar measure is also right invariant,
or equivalently, if it is invariant under inversion. For example, the
Heisenberg group is $(c, 4)$-regular for some $c$ and unimodular.

Theorem~\ref{auxthm} is a variant of \cite[Theorem 1.1(b)]{FJJSUP} and
\cite[Theorem 1]{Persson15}. We give a somewhat different proof even though
the main philosophy follows the same lines as the proofs in
\cite{FJJSUP,Persson15}.

\begin{theorem}	\label{auxthm}
Let\/ $G$ be a unimodular group with left invariant metric and $(c, d)$-regular
Haar measure $\lambda$, and let\/ $W$ be a bounded open subset of $G$ such that
$\lambda(W) = 1$. Define the probability space $(\Omega, \P)$
by\/ $\Omega = G^\nat$ and\/ $\P = (\restr\lambda W)^\nat$.
Let\/ $(V_n)$ be a bounded sequence of open subsets of\/ $G$ (\emph{bounded}
meaning that there is a ball in $G$ that contains\/ $V_n$ for every $n$). For
$\omega = (\omega_n) \in \Omega$ let
	\[
	E(\omega) = \limsup_n \left(\omega_n V_n\right).
	\]
If\/ $t \in (0, d)$ and
	\[
	\sum_n \Cap_t(V_n) = \infty
	\]
then almost every $\omega$ is such that
$\hmeas^t\left(\mathcal U \cap E(\omega)\right) = \infty$
for every open subset $\mathcal U$ of\/ $W$.
\end{theorem}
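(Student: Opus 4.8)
The plan is to establish the lower bound on Hausdorff measure via the energy/capacity method, following the Borel--Cantelli-type philosophy of \cite{FJJSUP, Persson15} but adapted to the unimodular group setting. First I would reduce to showing that for a fixed open $\mathcal U \subseteq W$ and a fixed large $M > 0$, almost surely $\hmeas^t(\mathcal U \cap E(\omega)) > M$; intersecting over a countable basis of open sets and letting $M \to \infty$ then yields the full conclusion. To bound $\hmeas^t$ from below on a set, the standard tool is the mass distribution principle applied via capacity: if one can produce, for each $\omega$ in a set of probability close to $1$, a probability measure supported on $\mathcal U \cap E(\omega)$ with uniformly bounded $t$-energy, then $\Cap_t$ and hence $\hmeas^t$ (by \eqref{caplesscontent}, or rather a localized version) of that set is bounded below. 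The quantitative version needed is that $\hmeas^t_\infty(A) \geq \Cap_t(A)$ combined with a Frostman-type argument to pass from positive content to positive (indeed infinite) Hausdorff measure after exhausting $E(\omega)$ by infinitely many independent ``generations'' of rectangles.

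The core construction is to build a random Cantor-like subset of $E(\omega)$ together with a natural measure on it. Since $\sum_n \Cap_t(V_n) = \infty$, I would group the indices $n$ into consecutive blocks $B_1, B_2, \dots$ so that $\sum_{n \in B_j} \Cap_t(V_n)$ is large (say $\geq$ some fixed big constant) for every $j$; the sets $\{\omega_n V_n : n \in B_j\}$ for different $j$ are stochastically independent because the $\omega_n$ are i.i.d. For each block one runs a second-moment / Paley--Zygmund argument: the expected $\lambda$-measure (restricted to $\mathcal U$) of $\bigcup_{n \in B_j} \omega_n V_n$ is controlled from below using $(c,d)$-regularity and translation invariance of $\lambda$, while the second moment is controlled using that $\lambda(\omega_n V_n \cap \omega_m V_m)$, integrated over $\omega_n, \omega_m$, factors as a convolution and is estimated by $\Cap_t(V_n)\Cap_t(V_m)$-type quantities via the energy integral (here one uses $\Cap_t(V) = 1/\inf_\mu I_t(\mu)$ and unimodularity to symmetrize). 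This gives that with probability bounded away from $0$, the block $j$ ``hits'' a definite proportion of any ball inside $\mathcal U$. Independence across blocks, together with a conditional Borel--Cantelli argument (a.s.\ each ball is hit by infinitely many blocks), then lets one extract a nested sequence of such hits and define a limit measure $\mu_\omega$ on $\mathcal U \cap E(\omega)$; the energy bound $I_t(\mu_\omega) < \infty$ is inherited from the per-block energy estimates, which are the very quantities $\Cap_t(V_n)^{-1}$ summed against the block weights.

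The main obstacle I expect is the second-moment estimate controlling correlations \emph{within} a block, i.e.\ bounding $\iint \lambda\bigl((\omega V_n)\cap(\omega' V_m)\bigr)\,\mathrm d\lambda_W(\omega)\,\mathrm d\lambda_W(\omega')$ in terms of $\Cap_t(V_n)$ and $\Cap_t(V_m)$. In the Euclidean/Ahlfors-regular setting of \cite{FJJSUP} this uses Lipschitz projections and Fourier/convolution structure; here one must instead work directly with the group convolution, writing the double integral as $\int (\mathbf 1_{V_n} * \widetilde{\mathbf 1_{V_m}})(\cdot)$-type expression, and then estimate this using only $(c,d)$-regularity of $\lambda$ and the definition of capacity via energies on the group $(G, d)$. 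Unimodularity is exactly what is needed to make the inversion $\widetilde{\mathbf 1_{V_m}}(g) = \mathbf 1_{V_m}(g^{-1})$ measure-preserving so that the convolution is symmetric and the energy of the optimal Frostman measure on $V_m$ can be fed in. The remaining steps---choosing the blocks, the Paley--Zygmund lower bound on the hitting probability, the conditional Borel--Cantelli across independent blocks, the weak-$*$ extraction of $\mu_\omega$ and verification that it is supported on $\mathcal U \cap E(\omega)$ with finite $t$-energy, and finally letting $M \to \infty$---are technical but follow the established template, so I would present them more briskly.
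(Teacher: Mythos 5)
Your overall strategy---Frostman measures on the $V_n$, a second-moment/energy computation exploiting independence and unimodularity, and a nested construction producing a measure of finite $t$-energy on $E(\omega)$---is the right family of argument and close in spirit to the paper's, which implements it with weighted sums $\varphi_n^\omega=\sum_k a_{n,k}\psi_k^\omega$ of translated, smoothed Frostman densities (Lemmas~\ref{prop3.8lemma} and~\ref{clemma}) rather than with blocks. But as written the plan has gaps at exactly the points where the work is. First, your per-block first moment is taken of the wrong object: you propose to bound from below ``the expected $\lambda$-measure of $\bigcup_{n\in B_j}\omega_nV_n$'', but $\sum_n\Cap_t(V_n)=\infty$ gives no lower bound whatsoever on $\sum_n\lambda(V_n)$ (a ball of radius $r$ has $\Cap_t\approx r^t$ while $\lambda\approx r^d$ with $t<d$), and $E(\omega)$ is typically Lebesgue-null; so a block does not ``hit a definite proportion of any ball'' in measure. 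The Paley--Zygmund argument must instead be run on the random measure $\mu_j=\sum_{n\in B_j}c_n\,\theta_n^{\omega_n}$ built from near-optimal Frostman measures $\theta_n$ on $V_n$ with capacity-dependent normalizations; this is precisely what the paper's $\varphi_n^\omega$ encodes, with the normalization supplied by Lemma~\ref{clemma}.

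Second, the two genuinely hard steps are only waved at. (i) The multi-generation energy control: knowing that each ball is charged by infinitely many independent blocks does not by itself produce a single measure on $\limsup_n(\omega_nV_n)$ with finite $t$-energy; one must show that the energy of the successively restricted (product) measures stays bounded over infinitely many generations. That is the content of Lemma~\ref{mainlemma} combined with the almost sure inequality $\liminf_nI_t(\varphi_n^\omega\rho\intd\nu)\le I_t(\rho\intd\nu)$ from Lemma~\ref{asliminflemma}, and ``the energy bound is inherited from the per-block estimates'' does not substitute for it. (ii) Your reduction to ``$\hmeas^t>M$ for each $M$'' is stated without a mechanism; a single limit measure only yields the fixed lower bound $\nu(X)^2/I_t(\nu)$. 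The way to get $\infty$ is to apply the positive-capacity bound to $\gtrsim 2^{dm}$ disjoint balls of radius $2^{-m}$ inside $\mathcal U$, each contributing $\gtrsim 2^{-tm}$ by Lemma~\ref{ballenergyestlemma}, and let $m\to\infty$. Finally, you need a preliminary reduction to $|V_n|\to0$ that preserves $\sum_n\Cap_t(V_n)=\infty$ (Lemma~\ref{diamVnto0lemma}, itself a nontrivial averaging argument); without it the first-moment estimate $\E\mu_j(B)\approx\lambda(B)$ fails for balls $B$ small compared to $|V_n|$, and the weak-$*$ limit measure need not be supported on the limsup set.
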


The assumption that $\lambda(W) = 1$ makes the proof slightly easier to
read, but is not essential since a constant multiple of a $(c, d)$-regular
Haar measure is a $(c', d)$-regular Haar measure for some $c'$.

The proof of Theorem~\ref{auxthm} is based on the following deterministic
lemma (compare \cite[Lemma 7]{Falconer94}, \cite[Proposition 4.6]{FJJSUP}
and \cite[Theorem 1]{PerssonReeve15}), which is proved in
Section~\ref{proofofmainlemma}. If $X$ is a locally compact metric space,
the \emph{weak-$*$ topology} on the space of Radon measures on $X$
is the topology generated by the maps $\{\mu \mapsto \mu(\varphi) \}$, where
$\varphi$ ranges over the continuous compactly supported functions
$X \to \rea$. It is not
difficult to see that $\lim_{n \to \infty} \mu_n = \mu$ in the weak-$*$
topology if and only if $\lim_{n \to \infty} \mu_n(\varphi) = \mu(\varphi)$
for every continuous compactly supported function $\varphi$.
Naturally, this also defines the weak-$*$ topology in the case $X$ is compact.

\begin{lemma} \label{mainlemma}
Let\/ $\nu$ be a finite Borel measure on a compact metric space
$X$ and let\/ $(\varphi_n)_{n = 1}^\infty$ be a sequence of non-negative
continuous functions on\/ $X$ such that\/
$\lim_{n \to \infty} \varphi_n\intd\nu = \nu$ in the weak-$*$ topology
and\/ $\liminf_{n \to \infty} I_t(\varphi_n\rho\intd\nu) \leq I_t(\rho\intd\nu)$
whenever $\rho$ is a product of finitely many of the
functions\/ $\{\varphi_n\}$. Then for every $t > 0$,
	\[
	\Cap_t\left(\supp \nu \cap \limsup_n (\supp \varphi_n)\right)
	\geq \frac{\nu(X)^2}{I_t(\nu)}.
	\]
\end{lemma}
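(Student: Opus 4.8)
The plan is to build, for each $\eta > 0$, a non-zero finite Borel measure $\mu$ carried by $A := \supp\nu \cap \limsup_n(\supp\varphi_n)$ with $\mu(X) \geq \nu(X) - \eta$ and $I_t(\mu) \leq I_t(\nu) + \eta$. Then $\mu/\mu(X) \in \probs(A)$, so that
	\[
	\Cap_t(A) \geq \frac{\mu(X)^2}{I_t(\mu)} \geq \frac{(\nu(X) - \eta)^2}{I_t(\nu) + \eta},
	\]
and letting $\eta \to 0$ gives the lemma. (If $I_t(\nu) = \infty$ or $\nu = 0$ there is nothing to prove, so I may assume $0 < \nu(X) < \infty$ and $I_t(\nu) < \infty$.) I would obtain $\mu$ as a weak-$*$ subsequential limit of the partial-product measures $\mu_k := \rho_k \intd\nu$, where $\rho_k := \varphi_{n_1}\varphi_{n_2}\cdots\varphi_{n_k}$ for a suitably fast increasing sequence $n_1 < n_2 < \cdots$ and $\rho_0 \equiv 1$, so that $\mu_0 = \nu$.

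First I would choose the $n_k$ inductively. Given $n_1, \dots, n_k$, the energy hypothesis applied to the finite product $\rho_k$ (the empty product when $k = 0$, i.e.\ $\liminf_n I_t(\varphi_n\intd\nu) \leq I_t(\nu)$) yields $\liminf_n I_t(\varphi_n \rho_k \intd\nu) \leq I_t(\rho_k \intd\nu) = I_t(\mu_k)$, while, since $\rho_k$ is continuous and $\varphi_n \intd\nu \to \nu$ in the weak-$*$ topology, $\int \rho_k \varphi_n \intd\nu \to \int \rho_k \intd\nu = \mu_k(X)$ as $n \to \infty$. Hence one can choose $n_{k+1} > n_k$ so large that, with $\mu_{k+1} := \varphi_{n_{k+1}}\rho_k \intd\nu = \rho_{k+1}\intd\nu$, both $I_t(\mu_{k+1}) \leq I_t(\mu_k) + 2^{-k-1}\eta$ and $|\mu_{k+1}(X) - \mu_k(X)| \leq 2^{-k-1}\eta$. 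Summing the telescoping inequalities gives $I_t(\mu_k) \leq I_t(\nu) + \eta$ and $\mu_k(X) \geq \nu(X) - \eta$ for all $k$. In particular the masses $\mu_k(X)$ are uniformly bounded, so by weak-$*$ sequential compactness of bounded sets of Radon measures on the compact metric space $X$, some subsequence $\mu_{k_j}$ converges weak-$*$ to a measure $\mu$. Testing against the constant function $1$ gives $\mu(X) = \lim_j \mu_{k_j}(X) \geq \nu(X) - \eta$, and since the kernel $(s, s') \mapsto d(s, s')^{-t}$ is lower semicontinuous, the $t$-energy is weak-$*$ lower semicontinuous, whence $I_t(\mu) \leq \liminf_j I_t(\mu_{k_j}) \leq I_t(\nu) + \eta$.

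It remains to locate $\supp\mu$. Each $\mu_k$ is absolutely continuous with respect to $\nu$ and therefore carried by the closed set $\supp\nu$; and for $k \geq m$ the continuous density $\rho_k$ vanishes wherever $\varphi_{n_m}$ does, so $\mu_k$ is carried by the closed set $\supp\varphi_{n_m}$. A closed set carrying all but finitely many of the $\mu_{k_j}$ must contain $\supp\mu$, as one sees by testing $\mu$ against continuous functions supported in its open complement. Hence $\supp\mu \subseteq \supp\nu \cap \bigcap_m \supp\varphi_{n_m} \subseteq \supp\nu \cap \limsup_n(\supp\varphi_n) = A$, using $n_m \to \infty$ for the last inclusion. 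Since $\mu(X \setminus \supp\mu) = 0$ we get $\mu(A) = \mu(X)$, which completes the construction.

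The step I expect to be the main obstacle is the coupled control needed in the inductive choice of $n_{k+1}$: the energy hypothesis supplies only a $\liminf$ inequality, so at best one gets $I_t(\mu_{k+1})$ just above $I_t(\mu_k)$, and while exploiting this one must keep the total masses $\mu_k(X)$ from drifting toward $0$ --- otherwise the normalised energy $I_t(\mu_k)/\mu_k(X)^2$ would blow up and the bound on $\Cap_t(A)$ would be lost. This is precisely where the continuity of the $\varphi_n$, hence of each partial product $\rho_k$, is essential, since it lets the weak-$*$ convergence $\varphi_n \intd\nu \to \nu$ be tested against $\rho_k$. The remaining nontrivial ingredient, the weak-$*$ lower semicontinuity of the $t$-energy, is standard but must be invoked to transfer the uniform bound $I_t(\mu_k) \leq I_t(\nu) + \eta$ to the limit measure $\mu$.
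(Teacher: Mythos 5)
Your proposal is correct and follows essentially the same route as the paper: recursively select indices so that the partial products $\rho_k\intd\nu$ lose at most a summable amount of mass and gain at most a summable amount of energy, pass to a weak-$*$ accumulation point, use lower semicontinuity of the $t$-energy (which the paper proves as a separate lemma via truncated kernels and continuity of $\mu\mapsto\mu\times\mu$), and conclude from the support inclusion $\supp\mu\subset\supp\nu\cap\bigcap_k\supp\varphi_{n_k}$. The only cosmetic differences are your explicit reductions ($\nu\neq 0$, $I_t(\nu)<\infty$) and the $2^{-k-1}\eta$ bookkeeping in place of the paper's $\varepsilon_k$.
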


\subsection*{Notation and conventions}
All measures appearing below are Radon measures, but this will not be
explicitly stated. Thus ``measure'' below means ``Radon measure''.
If $X$ is a metric space with a measure $\mu$ and $\varphi$ is a non-negative
continuous function on $X$, then $I_t(\varphi)$ means $I_t(\varphi\intd\mu)$.
Similarly, if $A$ is a Borel subset of $X$ then $I_t(A)$ means
$I_t(\restr \mu A)$. If $X$ is a locally compact metric space, then the
space of Radon measures on $X$ is considered as a topological space under the
weak-$*$ topology.

\section{Proof of Lemma~\ref{mainlemma}}\label{proofofmainlemma}

Let $Y$ be a topological space. A function $f\colon Y \to (-\infty, \infty]$ is
\emph{lower semicontinuous} if $f^{-1}\big((a, \infty]\big)$ is open for every
$a \in \rea$, or equivalently if $f(y_0) \leq \liminf_{y \to y_0} f(y)$ for
every $y_0 \in Y$. The following lemma is well known, but a proof is included
for the convenience of the reader.

\begin{lemma} \label{semicontlemma}
Let $X$ be a compact metric space and let $t > 0$. Then $\mu \mapsto I_t(\mu)$
is lower semicontinuous.

\begin{proof}
Let $\mathcal M(X)$ be the space of finite measures on $X$.
It will first be shown that the map
$\mathcal M(X) \to \mathcal M(X) \times \mathcal M(X)$,
$\mu \mapsto \mu \times \mu$ is continuous. Let $\eta$ be a continuous
function on $X \times X$ and let $\mu_0 \in \mathcal M(X)$. It suffices
to show that for every $\varepsilon > 0$, the set
	\[
	A =
	\left\{
	\mu \in \mathcal M(X); \,
	|(\mu \times \mu)(\eta) - (\mu_0 \times \mu_0)(\eta)| < 3 \varepsilon
	\right\}
	\]
contains an open neighbourhood of $\mu_0$.

Let $K > \mu_0(X)$.
By Stone--Weierstrass' theorem, there are functions $\{\eta_i\}_{i = 1}^n$
of the form $\eta_i = \eta_{i, 1} \times \eta_{i, 2}$, where $\eta_{i, j}$ are
continuous, such that
$\left\|\eta - \sum_{i = 1}^n \eta_i \right\|_\infty < \varepsilon / K^2$.
The set
	\[
	V = \left\{
	\mu \in \mathcal M(X); \,
	|(\mu\times\mu)(\eta_i) - (\mu_0\times \mu_0)(\eta_i)| <
	\frac \varepsilon n
	\text{ for every $i$ and } \mu(X) < K \right\}
	\]
is open since
$(\mu\times\mu)(\eta_i) = \mu(\eta_{i, 1})\mu(\eta_{i, 2})$,
and $\mu_0 \in V$. If $\mu \in V$ then
	\begin{align*}
	\left| (\mu \times \mu)(\eta) - (\mu_0 \times \mu_0)(\eta) \right|
	&\leq
	\left|
	(\mu\times\mu)(\eta) -
	(\mu \times \mu) \left(\sum_{i = 1}^n \eta_i \right)
	\right| \\
	&+
	\sum_{i = 1}^n
	\left|
	(\mu \times \mu)(\eta_i)
	-
	(\mu_0 \times \mu_0)(\eta_i)
	\right| \\
	&+
	\left|
	(\mu_0 \times \mu_0)\left( \sum_{i = 1}^n \eta_i \right)
	-
	(\mu_0\times\mu_0)(\eta)
	\right| < 3\varepsilon,
	\end{align*}
so that $V$ is a subset of $A$.

Define
	\[
	I_t^M(\mu) =
	\iint d(x, y)^{-t} \wedge M \intd{\mu(y)} \intd{\mu(x)}.
	\]
Let $a \in \rea$ and let $\mu_0$ be a measure in $\mathcal M(X)$
such that $I_t(\mu_0) > a$. Then there exists $M$ such that
$I_t^M(\mu_0) > a$, and the set $\left\{\mu; \, I_t^M(\mu) > a \right\}$
is open, contains $\mu_0$ and is contained in
$\left\{\mu; \, I_t(\mu) > a\right\}$.
\end{proof}
\end{lemma}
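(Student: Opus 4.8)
The plan is to exhibit $\mu \mapsto I_t(\mu)$ as a supremum of a family of weak-$*$ continuous functions; any such supremum is automatically lower semicontinuous, since the preimage of $(a, \infty]$ is then a union of open sets. To build these continuous functions I would truncate the singular kernel: for $M > 0$ set $k_M(x, y) = d(x, y)^{-t} \wedge M$. Regarding $d(x, y)^{-t}$ as an extended-real-valued function that equals $+\infty$ on the diagonal, it is continuous into $(0, \infty]$, so $k_M$ is a genuine continuous real-valued function on the compact space $X \times X$. Writing $I_t^M(\mu) = (\mu \times \mu)(k_M)$, monotone convergence gives $I_t^M(\mu) \uparrow I_t(\mu)$ as $M \to \infty$ for every $\mu$, so it suffices to prove that each $I_t^M$ is weak-$*$ continuous.

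Since $k_M$ is a fixed continuous function, continuity of $I_t^M = (\,\cdot \times \cdot\,)(k_M)$ reduces to showing that the squaring map $\mathcal M(X) \to \mathcal M(X \times X)$, $\mu \mapsto \mu \times \mu$, is continuous for the weak-$*$ topologies. This is the heart of the argument. To test continuity at a measure $\mu_0$ against an arbitrary continuous $\eta$ on $X \times X$, I would use Stone--Weierstrass to approximate $\eta$ uniformly by a finite sum $\sum_{i = 1}^n \eta_{i, 1}(x)\eta_{i, 2}(y)$ of product functions. Each functional $\mu \mapsto (\mu \times \mu)(\eta_{i, 1} \times \eta_{i, 2}) = \mu(\eta_{i, 1})\,\mu(\eta_{i, 2})$ is a product of two weak-$*$ continuous maps, hence continuous, so the finite sum is continuous and one can force each $|(\mu \times \mu)(\eta_i) - (\mu_0 \times \mu_0)(\eta_i)|$ small on a neighbourhood of $\mu_0$.

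The subtlety, and the step I expect to be the main obstacle, is converting the uniform approximation of $\eta$ into control of $(\mu \times \mu)(\eta)$ uniformly over a neighbourhood. The error obeys $|(\mu \times \mu)(\eta - \sum_i \eta_i)| \leq \mu(X)^2 \,\|\eta - \sum_i \eta_i\|_\infty$, so to absorb the $\mu(X)^2$ factor I would first cut down to measures with $\mu(X) < K$ for a fixed $K > \mu_0(X)$; this is a weak-$*$ open condition containing $\mu_0$, obtained by testing against the constant function $1$. Choosing the Stone--Weierstrass approximation with $\|\eta - \sum_i \eta_i\|_\infty < \varepsilon / K^2$ then makes both the $\mu$- and $\mu_0$-error terms smaller than $\varepsilon$, while the finitely many product-functional estimates contribute at most another $\varepsilon$. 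Intersecting these open conditions yields a neighbourhood of $\mu_0$ on which $|(\mu \times \mu)(\eta) - (\mu_0 \times \mu_0)(\eta)| < 3\varepsilon$, establishing continuity of the squaring map and hence of every $I_t^M$, which completes the reduction and proves the lower semicontinuity of $\mu \mapsto I_t(\mu)$.
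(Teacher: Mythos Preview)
Your proposal is correct and follows essentially the same route as the paper: truncate the kernel to $d(x,y)^{-t}\wedge M$, reduce to continuity of the squaring map $\mu\mapsto\mu\times\mu$ via Stone--Weierstrass, and handle the uniform-approximation error by first restricting to the open set $\{\mu(X)<K\}$ for some $K>\mu_0(X)$. Even the $3\varepsilon$ bookkeeping and the $\varepsilon/K^2$ choice match the paper's argument.
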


\begin{proof}[Proof of Lemma~\ref{mainlemma}]
Let $\varepsilon \in (0, \nu(X))$ and let $(\varepsilon_k)$ be a sequence of
positive numbers such that $\sum_k \varepsilon_k \leq \varepsilon$. Define
recursively a sequence $(n_k)_{k = 1}^\infty$ of natural numbers as follows,
using the notation $\rho_k = \varphi_{n_k} \cdot \ldots \cdot \varphi_{n_1}$
with the convention $\rho_0 = 1$. For $k \geq 1$, assume that
$n_1, \ldots, n_{k - 1}$ are defined. Then since
$\lim_{n \to \infty} \nu(\varphi_n \rho_{k - 1}) = \nu(\rho_{k - 1})$
and
	\[
	\liminf_{n \to \infty} I_t(\varphi_n\rho_{k - 1}) \leq
	I_t(\rho_{k - 1}),
	\]
it is possible to find $n_k > n_{k - 1}$ such that
	\[
	\nu(\rho_k) \geq \nu(\rho_{k - 1}) - \varepsilon_k
	\]
and
	\[
	I_t\left(\rho_k\right)
	\leq
	I_t\left(\rho_{k - 1}\right)
	+ \varepsilon_k.
	\]

Let $\mu$ be an accumulation point of the sequence of measures
$(\rho_k\intd\nu)$\,---\,then there is a strictly increasing
sequence $(k_i)$ such that $\mu = \lim_{i \to \infty} \rho_{k_i}\intd\nu$.
Thus
	\[
	\mu(X) = \lim_{i \to \infty} \nu(\rho_{k_i})
	\geq \liminf_{k \to \infty} \nu(\rho_k) \geq
	\nu(X) - \sum_k \varepsilon_k \geq \nu(X) - \varepsilon,
	\]
and by Lemma~\ref{semicontlemma},
	\[
	I_t(\mu) \leq
	\liminf_{i \to \infty} I_t(\rho_{k_i}) \leq
	\limsup_{k \to \infty} I_t(\rho_k) \leq
	I_t(\nu) + \sum_k \varepsilon_k \leq
	I_t(\nu) + \varepsilon.
	\]
Moreover,
	\[
	\supp \mu \subset
	\supp \nu \cap \bigcap_k \supp \varphi_{n_k}
	\subset
	\supp \nu \cap \limsup_n\left( \supp \varphi_n \right)
	\]
and thus
	\[
	\Cap_t\left(\supp \nu \cap \limsup_n (\supp \varphi_n)\right)
	\geq
	\frac{\mu(X)^2}{I_t(\mu)} \geq
	\frac{(\nu(X) - \varepsilon)^2}{I_t(\nu) + \varepsilon}.
	\]
Letting $\varepsilon \to 0$ concludes the proof.
\end{proof}

\section{Proof of Theorem~\ref{auxthm}}

The proof of Theorem~\ref{auxthm} is based on constructing a
sequence $(\varphi_n^\omega)$ of random continuous functions
supported on a compact neighbourhood of $W$, satisfying the hypothesis
of Lemma~\ref{mainlemma}, such that
	\[
	\limsup_n \left(\supp \varphi_n^\omega\right)
	\subset
	\limsup_n (\omega_n V_n).
	\]
A few lemmas are needed. The first one is used in the proofs of
Lemma~\ref{prop3.8lemma} and Theorem~\ref{auxthm}.

\begin{lemma} \label{ballenergyestlemma}
Let $(X, \lambda)$ be a $(c, d)$-regular space and let $t \in (0, d)$. Then
there is a constant $C$ such that for every $x \in X$ and $r > 0$,
	\[
	I_t(\oball x r) \leq C r^{2d - t}.
	\]

\begin{proof}
If $\varphi$ is a non-negative Borel function on $X$ then
	\[
	\int \varphi \intd\lambda =
	\int_0^\infty \lambda\left\{z \in X; \, \varphi(z) > \gamma \right\}
        \intd\gamma,
	\]
since both sides equal the $\lambda \times \mathcal L eb$-measure of the set
$\left\{ (z, u) \in X \times [0, \infty]; \, u \in (0, \varphi(z)) \right\}$
(note that the boundary of this set has measure $0$). Thus for $y \in X$,
	\begin{align*}
	\int_{\oball x r} d(&y, z)^{-t} \intd{\lambda(z)} =
	\int_0^\infty \lambda\left\{ z \in \oball x r; \, d(y, z)^{-t}
        > \gamma \right\} \intd\gamma \\
	&=
	\int_0^\infty \lambda\left(\oball x r \cap \oball{y}{\gamma^{-1/t}}
        \right) \intd\gamma
	\leq
	c \int_0^\infty \min\left(r^d, \, \gamma^{-d / t} \right) \intd\gamma \\
	&=
	c\int_0^{r^{-t}} r^d \intd\gamma + c\int_{r^{-t}}^\infty \gamma^{-d/t}
        \intd\gamma
	=
	\frac{cd}{d - t} r^{d - t},
	\end{align*}
and thus
	\[
	I_t\left( \oball x r \right)
	\leq
	\lambda\left(\oball x r\right) \cdot \frac{cd}{d - t} r^{d - t}
	\leq \frac{c^2d}{d - t} r^{2d - t}. \qedhere
	\]
\end{proof}
\end{lemma}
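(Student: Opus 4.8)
The plan is to collapse the double integral defining $I_t(\oball{x}{r})$ into a one-variable estimate by means of the ``layer-cake'' (distribution function) identity, and then to use \emph{both} directions of the regularity hypothesis: the upper bound $\lambda(\oball{y}{s}) \leq c s^d$ near the singularity of the kernel $d(\cdot,\cdot)^{-t}$, and the upper bound $\lambda(\oball{x}{r}) \leq c r^d$ on the whole ball.

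First I would fix $y \in \oball{x}{r}$ and bound the inner integral $\int_{\oball{x}{r}} d(y,z)^{-t}\intd{\lambda(z)}$. Writing a non-negative measurable function as the integral of its distribution function (Fubini--Tonelli on $X \times [0,\infty)$), the superlevel set $\{z \in \oball{x}{r} : d(y,z)^{-t} > \gamma\}$ equals $\oball{x}{r} \cap \oball{y}{\gamma^{-1/t}}$. Applying upper-regularity to each of the two balls gives $\lambda\bigl(\oball{x}{r} \cap \oball{y}{\gamma^{-1/t}}\bigr) \leq c\min(r^d, \gamma^{-d/t})$, so the inner integral is at most $c\int_0^\infty \min(r^d, \gamma^{-d/t})\intd\gamma$. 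I would split this at $\gamma = r^{-t}$, the value where the two terms of the minimum agree: the lower range contributes $r^d \cdot r^{-t} = r^{d-t}$, and the upper range, $\int_{r^{-t}}^\infty \gamma^{-d/t}\intd\gamma$, is finite precisely because $t<d$ makes the exponent $d/t>1$, and equals $\tfrac{t}{d-t}\,r^{d-t}$. Hence the inner integral is at most $\tfrac{cd}{d-t}\,r^{d-t}$, a bound independent of $y\in\oball{x}{r}$.

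Then I would integrate this uniform estimate over $y \in \oball{x}{r}$ and apply upper-regularity once more, obtaining $I_t(\oball{x}{r}) \leq \lambda(\oball{x}{r})\cdot\tfrac{cd}{d-t}\,r^{d-t} \leq \tfrac{c^2 d}{d-t}\,r^{2d-t}$, so that $C = c^2 d/(d-t)$ works. (For $r$ larger than the diameter of $X$ one has $\oball{x}{r} = X$, and since $2d-t > 0$ the same inequality survives.)

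I do not expect a serious obstacle: the content of the argument is the observation that both halves of the regularity inequality must be used\,---\,the $\gamma^{-d/t}$ term controls the mass of points close to $y$, where the kernel blows up, while the $r^d$ cap keeps the integral from diverging for small $\gamma$, i.e.\ once the radius $\gamma^{-1/t}$ overshoots $r$\,---\,together with the bookkeeping in the split integral, which is routine. The hypothesis $t < d$ is exactly what renders the near-singularity contribution finite.
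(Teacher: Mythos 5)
Your proposal is correct and follows essentially the same argument as the paper: the layer-cake identity, the bound $\lambda\bigl(\oball x r \cap \oball{y}{\gamma^{-1/t}}\bigr) \leq c\min\left(r^d, \gamma^{-d/t}\right)$, the split at $\gamma = r^{-t}$, and a final application of upper regularity to $\lambda(\oball x r)$, arriving at the same constant $C = c^2 d/(d-t)$.
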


The next lemma is a variant of~\cite[Proposition 3.8]{FJJSUP}. 

\begin{lemma} \label{prop3.8lemma}
Let $(X, \lambda)$ be a locally compact\/ $(c, d)$-regular space and
let\/ $\theta$ be a finite compactly supported measure on $X$. Then there
is a sequence $(\varphi_n)$ of
non-negative continuous compactly supported functions on $X$ such that\/
$\lim_{n \to \infty} \varphi_n \intd\lambda = \theta$ (in the weak-$*$
topology) and\/ $\limsup_{n \to \infty} I_t(\varphi_n) \leq I_t(\theta)$ for every
$t \in (0, d)$.

\begin{proof}
For each $n$, let $\{x_{n, i}\}$ be a maximal (finite or countable)
$1 / n$-separated subset of $X$. Then $\left\{\oball{x_{n, i}}{1 / 2n}\right\}$
are disjoint and $\left\{\oball{x_{n, i}}{1 / n}\right\}$ is a cover of $X$.
Let $Q_{n, i}$ be the set of points $x$ in $X$ for which $i$ is the
first index such that $d(x, x_{n, i}) = \min_j d(x, x_{n, j})$, that is,
	\begin{align*}
	Q_{n, i} = \big\{
	x \in X; \, &d(x, x_{n, i}) < d(x, x_{n, j}) \text{ for }
	j = 1, \ldots, i - 1 \text{ and } \\
	&d(x, x_{n, i}) \leq d(x, x_{n, j}) \text{ for } j = i, \ldots, N_n
	\big\}.
	\end{align*}
Then, for each $n$, $\{Q_{n, i}\}$ is a partition of $X$ into Borel sets and
	\[
	\oball{x_{n, i}}{\frac{1}{2n}} \subset Q_{n, i}
	\subset \oball{x_{n, i}}{\frac 1 n}
	\]
for every $i$.

For each $i$, let
	\[
	\varphi_{n, i}(x) = a_{n, i} \max\left(
	0, 1 - 4n d(x, x_{n, i})
	\right),
	\]
where $a_{n, i}$ is such that $\lambda(\varphi_{n, i}) = 1$.
Then $\varphi_{n, i}$ is supported on $\cball{x_{n, i}}{1 / 4n}$, and
$a_{n, i} \leq 2^{1 + 3d} c n^d$ since
	\[
	\int_{\oball{x_{n, i}}{\frac{1}{4n}}}
	\left(1 - 4n d(x, x_{n, i}) \right)
	\intd{\lambda(x)}
	\geq
	\frac 1 2 \lambda\left(\oball{x_{n, i}}{\frac{1}{8n}}\right)
	\geq 2^{-(1 + 3d)} c^{-1} n^{-d}.
	\]
Let
	\[
	\varphi_n = \sum_i \theta(Q_{n, i}) \varphi_{n, i}.
	\]
Note that, for every $x$, there are only finitely many non-zero terms in
the sum defining $\varphi_n$. Further, $\varphi_n$ is compactly supported since
$\theta$ has a compact support.  
        
Let $\eta$ be a continuous compactly supported function on $X$ and let
$\varepsilon > 0$. Then $\eta$ is \emph{uniformly} continuous and there
is some $n_0$ such that
	\[
	\left|\eta(x) - \eta(x_{n, i})\right| \leq \frac{\varepsilon}{2\theta(X)}
	\]
whenever $n \geq n_0$ and $x \in Q_{n, i}$. Then for $n \geq n_0$,
	\begin{align*}
	&\left|\theta(\eta) - (\varphi_n\intd\lambda)(\eta)\right|
	\leq \\
	&\qquad \sum_i
	\left(
	\int_{Q_{n, i}} |\eta - \eta(x_{n, i})| \intd\theta
	+
	\theta(Q_{n, i})
	\int_{Q_{n, i}} |\eta(x_{n, i}) - \eta|\varphi_{n, i} \intd\lambda
	\right)
	\leq \varepsilon,
	\end{align*}
using that $(\varphi_{n, i}\intd\lambda)(Q_{n, i}) = 1$ for
every $i$. Thus $\lim_{n \to \infty} \varphi_n\intd\lambda = \theta$.
It remains to show that $\limsup_{n \to \infty} I_t(\varphi_n) \leq I_t(\theta)$,
and for this it may be assumed that $I_t(\theta) < \infty$.

Let $n$ be a natural number and $\alpha > 1$, and if $\psi_1, \psi_2$ are
continuous compactly supported functions on $X$ let
	\[
	J_t(\psi_1, \psi_2) = \iint d(x, y)^{-t}
	\psi_1(x) \psi_2(y) \intd{\lambda(x)} \intd{\lambda(y)}.
	\]
Then
	\[
	I_t(\varphi_n) = S_1 + S_2 + S_3,
	\]
where
	\begin{align*}
	S_1 &= \sum_{d(x_{n, i}, x_{n, j}) \geq \frac \alpha  n}
	\theta(Q_{n, i})\theta(Q_{n, j})
	J_t(\varphi_{n, i}, \varphi_{n, j}), \\
	S_2 &= \sum_{\substack{d(x_{n, i}, x_{n, j}) < \frac \alpha  n \\ i \neq j}}
	\theta(Q_{n, i})\theta(Q_{n, j})
	J_t(\varphi_{n, i}, \varphi_{n, j}), \\
	S_3 &= \sum_{\substack{i \\ \phantom{d(x_{n, i}, x_{n, j}) < \frac \alpha  n}}}
	\theta(Q_{n, i})^2 I_t(\varphi_{n, i}).
	\end{align*}

If $x \in \supp \varphi_{n, i}$ and $y \in \supp \varphi_{n, j}$ and
$d(x_{n, i}, x_{n, j}) \geq \alpha / n$, then
	\[
	|d(x, y) - d(x_{n, i}, x_{n, j})| \leq \frac 1 n \leq
	\frac{d(x_{n, i}, x_{n, j})}{\alpha},
	\]
so that
	\[
	\frac{\alpha - 1}{\alpha}
	\leq \frac{d(x, y)}{d(x_{n, i}, x_{n, j})} \leq
	\frac{\alpha + 1}{\alpha}.
	\]
Thus for $i, j$ appearing in $S_1$,
	\begin{align*}
	\theta(Q_{n, i})\theta(Q_{n, j}) J_t(\varphi_{n, i}, \varphi_{n, j})
	&\leq
	\left(\frac{\alpha}{\alpha - 1}\right)^t \theta(Q_{n, i})\theta(Q_{n, j})
	d(x_{n, i}, x_{n, j})^{-t} \\
	&\leq
	\left(\frac{\alpha + 1}{\alpha - 1}\right)^t
	\iint_{Q_{n, i} \times Q_{n, j}} d(x, y)^{-t} \intd{\theta(x)}
	\intd{\theta(y)},
	\end{align*}
and it follows that
	\[
	S_1 \leq \left(\frac{\alpha + 1}{\alpha - 1}\right)^t I_t(\theta).
	\]

If $x \in \supp \varphi_{n, i}$ and $y \in \supp \varphi_{n, j}$ 
then $i \neq j$ implies that $d(x, y) \geq 1 / 2n$ and if
$x \in Q_{n, i}$ and $y \in Q_{n, j}$ then $d(x_{n, i}, x_{n, j}) \leq \alpha / n$
implies that $d(x, y) \leq (\alpha + 2) / n$. Thus
	\begin{align*}
	S_2 &\leq
	2^t \sum_{\substack{d(x_{n, i}, x_{n, j}) < \frac \alpha  n \\ i \neq j}}
	\theta(Q_{n, i})\theta(Q_{n, j}) n^t \\
	&\leq
	2^t (\alpha + 2)^t
	\sum_{d(x_{n, i}, x_{n, j}) < \frac \alpha n}
	\iint_{Q_{n, i} \times Q_{n, j}} d(x, y)^{-t} \intd{\theta(x)}\intd{\theta(y)}\\
	&\leq
	2^t (\alpha + 2)^t \iint_{d(x, y) \leq \frac{\alpha + 2}{n}}
	d(x, y)^{-t} \intd{\theta(x)} \intd{\theta(y)}.
	\end{align*}

By Lemma~\ref{ballenergyestlemma} there is a constant C such that
	\[
	I_t(\oball x r) \leq C r^{2d - t} 
	\]
for every $x \in X$ and $r > 0$. It follows that
	\[
	I_t(\varphi_{n, i}) \leq a_{n, i}^2 I_t\left( \oball{x_{n, i}}{\frac{1}{4n}}
        \right)
	\leq
	C' n^t,
	\]
where $C' = 4^{1 + 3d} c^2 C$, so that
	\begin{align*}
	S_3 &\leq C' \sum_i \theta(Q_{n, i})^2 n^t
	\leq 2^t C' \sum_i \iint_{Q_{n, i} \times Q_{n, i}}
	d(x, y)^{-t} \intd{\theta(x)}\intd{\theta(y)} \\
	&\leq
	2^t C'
	\iint_{d(x, y) \leq \frac 2 n}
	d(x, y)^{-t} \intd{\theta(x)}\intd{\theta(y)}.
	\end{align*}

Given $\varepsilon > 0$ it is possible to choose $\alpha$ large
enough so that \mbox{$S_1 \leq I_t(\theta) + \varepsilon$}, and then $n_0$ large
enough so that $S_2 + S_3 \leq \varepsilon$ for every $n \geq n_0$.
It follows that
$\limsup_{n \to \infty} I_t(\varphi_n) \leq I_t(\theta) + 2\varepsilon$,
and letting $\varepsilon \to 0$ concludes the proof.
\end{proof}
\end{lemma}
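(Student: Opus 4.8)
The plan is to build $\varphi_n$ by spreading the mass of $\theta$ onto small continuous bumps centred on an increasingly fine net, and to extract both the weak-$*$ convergence and the energy bound from $(c,d)$-regularity together with the ball energy estimate of Lemma~\ref{ballenergyestlemma}.

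For each $n$ I would fix a maximal $1/n$-separated subset $\{x_{n,i}\}$ of $X$; maximality makes $\{\oball{x_{n,i}}{1/n}\}$ a cover and $\{\oball{x_{n,i}}{1/2n}\}$ disjoint. Assigning each point to its nearest centre, with ties broken by index, yields a Borel partition $\{Q_{n,i}\}$ with $\oball{x_{n,i}}{1/2n} \subset Q_{n,i} \subset \oball{x_{n,i}}{1/n}$. On each centre I place a tent function $\varphi_{n,i}$ supported in $\cball{x_{n,i}}{1/4n}$, normalised so that $\lambda(\varphi_{n,i}) = 1$; regularity bounds the normalising constant by a multiple of $n^d$. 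Setting $\varphi_n = \sum_i \theta(Q_{n,i})\varphi_{n,i}$ then gives a non-negative continuous function, compactly supported because $\theta$ is, with only finitely many non-zero terms near any point.

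Weak-$*$ convergence I would verify against an arbitrary continuous compactly supported $\eta$. Uniform continuity makes $\eta$ nearly constant, equal to $\eta(x_{n,i})$, on each cell $Q_{n,i}$ once $n$ is large; since each $\varphi_{n,i}\intd\lambda$ is a probability measure supported inside $Q_{n,i}$ and contributes total weight $\theta(Q_{n,i})$ to $\varphi_n\intd\lambda$, the quantities $\theta(\eta)$ and $(\varphi_n\intd\lambda)(\eta)$ differ cell-by-cell only through the oscillation of $\eta$. Summing these errors against $\sum_i \theta(Q_{n,i}) = \theta(X)$ drives the total difference to $0$, giving $\lim_n \varphi_n\intd\lambda = \theta$.

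The decisive step, which I expect to be the main obstacle, is $\limsup_n I_t(\varphi_n) \leq I_t(\theta)$, where I may assume $I_t(\theta) < \infty$. Expanding $I_t(\varphi_n)$ as a double sum of the bilinear energies $\iint d(x,y)^{-t}\varphi_{n,i}(x)\varphi_{n,j}(y)\intd{\lambda(x)}\intd{\lambda(y)}$, weighted by $\theta(Q_{n,i})\theta(Q_{n,j})$, I would split the pairs $(i,j)$ by a threshold $\alpha/n$ into three regimes. For well-separated centres, $d(x_{n,i},x_{n,j}) \geq \alpha/n$, the bump supports lie within $1/4n$ of their centres, so $d(x,y)$ and $d(x_{n,i},x_{n,j})$ agree up to a factor $\frac{\alpha\pm1}{\alpha}$; comparing the discrete kernel with the integral of $d(x,y)^{-t}$ over $Q_{n,i}\times Q_{n,j}$ bounds this part by $\left(\frac{\alpha+1}{\alpha-1}\right)^t I_t(\theta)$, whose prefactor tends to $1$ as $\alpha\to\infty$. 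The other two regimes are the delicate ones and are handled by showing they vanish as $n\to\infty$: for distinct but sub-threshold centres the separation of supports is at least $1/2n$ while $d(x_{n,i},x_{n,j}) \leq \alpha/n$, so the kernel is of order $n^t$ and the contribution is dominated by $\iint_{d(x,y)\leq(\alpha+2)/n} d(x,y)^{-t}\intd{\theta(x)}\intd{\theta(y)}$; and for the diagonal $i = j$, Lemma~\ref{ballenergyestlemma} bounds each $I_t(\varphi_{n,i})$ by a multiple of $n^t$, so that term is dominated by $\iint_{d(x,y)\leq 2/n} d(x,y)^{-t}\intd{\theta(x)}\intd{\theta(y)}$. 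Because $I_t(\theta)<\infty$ makes $d(x,y)^{-t}$ integrable against $\theta\times\theta$, both tail integrals tend to $0$ as the integration region collapses onto the diagonal. Fixing $\alpha$ first and then letting $n\to\infty$ therefore gives $\limsup_n I_t(\varphi_n) \leq \left(\frac{\alpha+1}{\alpha-1}\right)^t I_t(\theta)$, and letting $\alpha\to\infty$ completes the estimate for every $t\in(0,d)$.
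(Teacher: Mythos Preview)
Your proposal is correct and follows essentially the same approach as the paper: the same net-and-Voronoi construction with tent functions of radius $1/4n$, the same uniform-continuity argument for weak-$*$ convergence, and the same three-way split of $I_t(\varphi_n)$ by the threshold $\alpha/n$, with the far, near-off-diagonal, and diagonal contributions handled exactly as you describe. The only cosmetic difference is that the paper phrases the final step as ``choose $\alpha$, then $n_0$'' rather than ``fix $\alpha$, let $n\to\infty$, then $\alpha\to\infty$'', which is the same argument.
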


The following lemma is a modification of the argument from
\cite[p.~1580]{FJJSUP}.

\begin{lemma} \label{diamVnto0lemma}
Let $(X, \lambda)$ be a $(c, d)$-regular space and let
$(V_n)$ be a sequence of open subsets of an open ball $B$ in $X$
such that\/ $\sum_n \Cap_t(V_n) = \infty$. Then there is a
sequence $(V_n')$ of open subsets of\/ $X$ such that

\begin{enumerate}[label=\roman*)]
\item
$V_n' \subset V_n$ for every $n$,

\item
$\lim_{n \to \infty} |V_n'| = 0$, and

\item
$\sum_n \Cap_t(V_n') = \infty$.
\end{enumerate}

\begin{proof}
Let $B_1 = \{x \in X; \, d(x, B) < 1 \}$ and for each $n$, let $\mu_n$ be a
probability measure on $V_n$ such that $I_t(\mu_n) \leq 2\Cap_t(V_n)^{-1}$.
Consider any $r \in (0, 1)$ and let
$A_n = \{x \in X; \, \mu_n(\oball x r) > 0\}$,
and for $x \in A_n$ let
$\mu_n^x = \mu_{n}(\oball x r)^{-1} \restr{\mu_n}{\oball x r}$.
Using Cauchy--Schwartz' inequality, Fubini's theorem and then that
$\oball y r \subset B_1$ for $\mu_n$-almost every
$y$ for every $n$,
	\begin{align*}
	\int_{B_1} \mu_n(\oball x r)^2 \intd{\lambda(x)}
	&\geq
	\frac{1}{\lambda(B_1)} \left(
	\int_{B_1} \mu_n(\oball x r) \intd{\lambda(x)}\right)^2 \\
	&=
	\frac{1}{\lambda(B_1)} \left(
	\int \lambda\left(B_1 \cap \oball y r\right)
	\intd{\mu_n(y)}\right)^2
	\geq
	\frac{c^{-2}r^{2d}}{\lambda(B_1)}.
	\end{align*}
Thus for any natural number $a$,
	\begin{align*}
	\int_{B_1} \sum_{n = a}^\infty
	\Cap_t\left(V_n \cap \oball x r\right) \intd{\lambda(x)}
	&\geq
	\int_{B_1} \sum_{\substack{n \geq a \\ x \in A_n}}
	\frac{1}{I_t\left(\mu_n^x\right)} \intd{\lambda(x)} \\
	&\geq
	\sum_{n = a}^\infty \frac{1}{I_t(\mu_n)} \int_{B_1 \cap A_n}
	\mu_n(\oball x r)^2 \intd{\lambda(x)} \\
	&=
	\sum_{n = a}^\infty \frac{1}{I_t(\mu_n)} \int_{B_1}
	\mu_n(\oball x r)^2 \intd{\lambda(x)} = \infty.
	\end{align*}
It follows that $\sum_{n = a}^\infty \Cap_t\left(V_n \cap \oball x r\right)$
is unbounded as a function of $x$, and hence there exist $x \in B_1$ and a
natural number $b$ such that
	\[
	\sum_{n = a}^b
	\Cap_t\left( V_n \cap \oball x r\right) \geq 1.
	\]

It is now possible to recursively define a strictly increasing sequence $(n_k)$
of natural numbers and a sequence $(x_k)$ of points in $B_1$, such that
$n_1 = 1$ and for every $k$,
	\[
	\sum_{n = n_k}^{n_{k + 1} - 1}
	\Cap_t\left( V_n \cap \oball{x_k}{2^{-k}} \right) \geq 1.
	\]
The sequence $(V_n')$ defined by $V_n' = V_n \cap \oball{x_k}{2^{-k}}$
for $n = n_k, \ldots, n_{k + 1} - 1$ has the properties \emph{i)--iii)}.
\end{proof}
\end{lemma}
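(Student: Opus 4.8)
The plan is to exploit the fact that capacity is ``spread out'': even after intersecting each $V_n$ with a ball of small radius, one can keep the sum of capacities divergent provided the balls are allowed to be recentred for different blocks of indices. Writing $B_1 = \{x \in X; \, d(x, B) < 1\}$, the central claim is the following local-capacity estimate: for each fixed $r \in (0, 1)$ and each starting index $a$, the function
\[
x \mapsto \sum_{n = a}^\infty \Cap_t\left(V_n \cap \oball x r\right)
\]
is unbounded on $B_1$. Granting this, the lemma follows by recursion. Starting from $n_1 = 1$, at stage $k$ I apply the claim with $r = 2^{-k}$ and $a = n_k$ to produce a centre $x_k \in B_1$ and a terminal index $n_{k + 1} - 1$ with $\sum_{n = n_k}^{n_{k + 1} - 1} \Cap_t(V_n \cap \oball{x_k}{2^{-k}}) \geq 1$, and set $V_n' = V_n \cap \oball{x_k}{2^{-k}}$ for $n$ in this block. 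Property i) is immediate; property ii) holds because the sets in the $k$-th block lie in a ball of radius $2^{-k}$, so their diameters are at most $2^{1 - k} \to 0$; and property iii) holds because each block contributes at least $1$ to the capacity sum.

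To prove the claim I would use near-optimal energy measures. For each $n$ choose a probability measure $\mu_n$ on $V_n$ with $I_t(\mu_n) \leq 2 \Cap_t(V_n)^{-1}$, so that $I_t(\mu_n)^{-1} \geq \tfrac12 \Cap_t(V_n)$. For a point $x$ with $\mu_n(\oball x r) > 0$, the normalised restriction $\mu_n^x = \mu_n(\oball x r)^{-1} \restr{\mu_n}{\oball x r}$ is a probability measure supported in $V_n \cap \oball x r$, whence
\[
\Cap_t\left(V_n \cap \oball x r\right) \geq \frac{1}{I_t(\mu_n^x)} = \frac{\mu_n(\oball x r)^2}{I_t(\restr{\mu_n}{\oball x r})} \geq \frac{\mu_n(\oball x r)^2}{I_t(\mu_n)},
\]
the last inequality because restricting the non-negative kernel to a smaller set only decreases the energy. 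Integrating over $x \in B_1$ and summing over $n \geq a$ thus reduces the claim to a lower bound for $\int_{B_1} \mu_n(\oball x r)^2 \intd{\lambda(x)}$ that is uniform in $n$.

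For that lower bound I would apply Cauchy--Schwarz together with Fubini's theorem: since $\mu_n$ is supported in $V_n \subset B$ and $r < 1$, one has $\oball y r \subset B_1$ for $\mu_n$-almost every $y$, and therefore
\[
\int_{B_1} \mu_n(\oball x r)^2 \intd{\lambda(x)} \geq \frac{1}{\lambda(B_1)}\left(\int \lambda\left(B_1 \cap \oball y r\right) \intd{\mu_n(y)}\right)^2 \geq \frac{c^{-2} r^{2d}}{\lambda(B_1)},
\]
using the lower regularity bound $\lambda(\oball y r) \geq c^{-1} r^d$. Combining this with $I_t(\mu_n)^{-1} \geq \tfrac12 \Cap_t(V_n)$ and summing over $n \geq a$, the hypothesis $\sum_n \Cap_t(V_n) = \infty$ forces $\int_{B_1} \sum_{n \geq a} \Cap_t(V_n \cap \oball x r) \intd{\lambda(x)} = \infty$. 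Since $\lambda(B_1) < \infty$ by regularity and boundedness, the integrand cannot be bounded, which is exactly the claim; unboundedness then yields a point $x$ and a finite $b$ with $\sum_{n = a}^b \Cap_t(V_n \cap \oball x r) \geq 1$, supplying the inductive step.

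I expect the main obstacle to be the averaging argument for the claim\,---\,in particular, verifying that the lower regularity bound feeds correctly through Cauchy--Schwarz to give a bound on $\int_{B_1} \mu_n(\oball x r)^2 \intd{\lambda(x)}$ that is uniform in $n$ and insensitive to how the mass of $\mu_n$ is distributed inside $V_n$. Once this uniform local-capacity estimate is in place, the shrinking recursion and the verification of i)--iii) are routine bookkeeping.
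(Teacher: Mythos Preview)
Your proposal is correct and follows essentially the same route as the paper: choose near-optimal energy measures $\mu_n$, use Cauchy--Schwarz and Fubini together with lower regularity to get a uniform lower bound on $\int_{B_1}\mu_n(\oball x r)^2\intd{\lambda(x)}$, deduce that $\int_{B_1}\sum_{n\geq a}\Cap_t(V_n\cap\oball x r)\intd{\lambda(x)}=\infty$ and hence that the integrand is unbounded, and then run the block recursion with radii $2^{-k}$ to build $(V_n')$. The only cosmetic difference is that you state the unboundedness claim up front and derive the recursion from it, whereas the paper interleaves the estimate and the recursion.
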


\begin{lemma} \label{clemma}
Let $(b_n)$ be a sequence of positive numbers bounded away from $0$,
such that $\sum_n b_n^{-1} = \infty$. Then there are non-negative
numbers $(a_{n, k})_{n, k \in \nat}$ such that

\begin{enumerate}[label=\roman*)]
\item \label{itemi}
$(a_{n, k})_k$ has finite support for every $n$ and\/
$\lim_{n \to \infty} \min \{k; \, a_{n, k} \neq 0\} = \infty$,

\item \label{itemii}
$\sum_k a_{n, k} = 1$ for every $n$ and\/
$\sum_{n, k} a_{n, k}^2 < \infty$, and

\item \label{itemiii}
$\lim_{n \to \infty} \sum_k a_{n, k}^2 b_k  = 0$.

\end{enumerate}

\begin{proof}
Let
	\[
	a_{n, k} = 
	\begin{cases}
	a_n b_k^{-1} & \text{if } M_n \leq k \leq N_n, \\
	0 & \text{otherwise},
	\end{cases}
	\]
where
	\[
	a_n = \frac{1}{\sum_{k = M_n}^{N_n} b_k^{-1}}.
	\]
Since $\sum_n b_n^{-1} = \infty$ it is possible to choose $(M_n)$
and $(N_n)$ such that~{\it\ref{itemi}} holds and
$\sum_n a_n < \infty$. Then clearly $\sum_k a_{n, k} = 1$ for
every $n$, and
	\[
	\sum_{n, k} a_{n, k}^2 \leq
	B \sum_n \left( a_n \sum_k a_{n, k} \right) =
	B \sum_n a_n < \infty,
	\]
where $B = \sup_k b_k^{-1}$. Finally,
	\[
	\sum_k a_{n, k}^2 b_k
	=
	\frac{\sum_{k = M_n}^{N_n} b_k^{-1}}
	{\left(\sum_{k = M_n}^{N_n} b_k^{-1} \right)^2}
	=
	a_n,
	\]
which converges to $0$ when $n \to \infty$.
\end{proof}
\end{lemma}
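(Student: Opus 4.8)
The plan is to build, for each $n$, a probability vector $(a_{n,k})_k$ supported on a finite block of indices $M_n \le k \le N_n$, where the left endpoints $M_n$ increase to infinity and where inside the block the mass is placed proportionally to $b_k^{-1}$. Concretely I would set $a_{n,k} = a_n b_k^{-1}$ for $M_n \le k \le N_n$ and $a_{n,k} = 0$ otherwise, with $a_n = \big(\sum_{k = M_n}^{N_n} b_k^{-1}\big)^{-1}$ chosen so that $\sum_k a_{n,k} = 1$. With this choice the finite-support part of i) is automatic, the normalisation gives the first half of ii), and $M_n \to \infty$ gives $\lim_n \min\{k;\, a_{n,k} \neq 0\} = \lim_n M_n = \infty$, the remaining part of i). So after this setup everything reduces to choosing the blocks $[M_n, N_n]$ appropriately.

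The reason the weighting by $b_k^{-1}$ is the right one is that the quantity in iii) telescopes: $\sum_k a_{n,k}^2 b_k = a_n^2 \sum_{k = M_n}^{N_n} b_k^{-2} b_k = a_n^2 \sum_{k = M_n}^{N_n} b_k^{-1} = a_n$. Hence iii) holds precisely when $a_n \to 0$, i.e. when the block sums $\sum_{k = M_n}^{N_n} b_k^{-1}$ tend to infinity. For the second half of ii) I would use that $(b_n)$ is bounded away from $0$: this says $B := \sup_k b_k^{-1} < \infty$, and then $a_{n,k}^2 = a_{n,k}\,(a_n b_k^{-1}) \le B a_n a_{n,k}$, so that $\sum_k a_{n,k}^2 \le B a_n$ and $\sum_{n,k} a_{n,k}^2 \le B \sum_n a_n$. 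Thus ii) follows once $\sum_n a_n < \infty$ (which in particular also forces $a_n \to 0$, giving iii)).

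It remains to choose the blocks so that $M_n \to \infty$ and $\sum_n a_n < \infty$, and both are arranged by pushing the blocks far out and making each one carry a large harmonic-type mass. Using $\sum_k b_k^{-1} = \infty$, I would recursively pick $M_1 < M_2 < \cdots$ with $M_n \to \infty$, and then, since the tail $\sum_{k \ge M_n} b_k^{-1}$ still diverges, pick $N_n > M_n$ with $\sum_{k = M_n}^{N_n} b_k^{-1} \ge 2^n$; this forces $a_n \le 2^{-n}$, so $\sum_n a_n \le \sum_n 2^{-n} < \infty$, and i)--iii) all hold.

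I do not expect any genuine obstacle: the argument is elementary once the weighting $a_{n,k} \propto b_k^{-1}$ is in hand. The one step worth isolating is the observation that this weighting collapses $\sum_k a_{n,k}^2 b_k$ to the normalising constant $a_n$, after which the whole lemma is just the standard fact that a divergent series of positive terms can be partitioned into disjoint finite blocks, arbitrarily far out, each of arbitrarily large sum.
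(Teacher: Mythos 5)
Your proposal is correct and follows essentially the same route as the paper's own proof: the same block construction $a_{n,k}=a_nb_k^{-1}$ on $[M_n,N_n]$, the same collapse of $\sum_k a_{n,k}^2 b_k$ to $a_n$, and the same bound $\sum_{n,k}a_{n,k}^2\le B\sum_n a_n$. Your explicit choice of blocks with $\sum_{k=M_n}^{N_n}b_k^{-1}\ge 2^n$ just fills in the detail the paper leaves to the reader.
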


\begin{lemma} \label{asconvlemma}
Let $\mu$ be a probability measure on a compact metric
space $X$, and define the probability space $(\Omega, \P)$ by\/
$\Omega = X^\nat$ and\/ $\P = \mu^\nat$. Let\/ $(a_{n, k})$ be non-negative
numbers such that\/ $\sum_k a_{n, k} = 1$ for every $n$ and\/
$\sum_{n, k} a_{n, k}^2 < \infty$. For $\omega \in \Omega$, let
	\[
	\mu_n^\omega = \sum_k a_{n, k} \delta_{\omega_k}.
	\]
Then almost surely\/ $\lim_{n \to \infty} \mu_n^\omega = \mu$.

\begin{proof}
Let $\eta$ be a continuous function on $X$. Then for every $k$,
	\[
	\E \eta(\omega_k) = \mu(\eta)
	\]
and
	\[
	\Var \eta(\omega_k) = \mu\left(\eta^2\right) - \mu(\eta)^2
        \leq \|\eta\|_\infty^2,
	\]
and hence
	\[
	\E(\mu_n^\omega(\eta)) = \mu(\eta)
	\]
and
	\[
	\Var(\mu_n^\omega(\eta)) = \sum_k a_{n, k}^2 \Var \eta(\omega_k)
	\leq \|\eta\|_\infty^2 \sum_k a_{n, k}^2.
	\]

Let $\varepsilon > 0$. Then by Chebyshev's inequality,
	\[
	\sum_n \P\{|\mu_n^\omega(\eta) - \mu(\eta)| \geq \varepsilon\}
	\leq
	\sum_n \frac{\|\eta\|_\infty^2 \sum_k a_{n, k}^2}{\varepsilon^2} < \infty,
	\]
so that Borel--Cantelli's lemma implies that
	\[
	\limsup_{n \to \infty} |\mu_n^\omega(\eta) - \mu(\eta)| \leq \varepsilon
	\]
almost surely. Since the space of continuous functions on $X$ is separable, it
follows that $\lim_{n \to \infty} \mu_n^\omega = \mu$ almost surely.
\end{proof}
\end{lemma}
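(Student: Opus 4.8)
The plan is to reduce the weak-$*$ convergence of the random measures $\mu_n^\omega$ to convergence of the scalar sequences $\mu_n^\omega(\eta)$ for $\eta$ ranging over a countable dense subset of $C(X)$, and to prove the latter by a second-moment estimate combined with the Borel--Cantelli lemma. First one should note that for every $\omega$ the series $\sum_k a_{n,k}\delta_{\omega_k}$ converges to a Borel probability measure $\mu_n^\omega$, since the coefficients are non-negative and sum to $1$.

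Next I would fix a continuous function $\eta$ on $X$; it is bounded because $X$ is compact, and I regard $\mu_n^\omega(\eta) = \sum_k a_{n,k}\eta(\omega_k)$ as a random variable on $(\Omega,\P)$. Since the coordinates $\omega_k$ are independent with common law $\mu$, each $\eta(\omega_k)$ has mean $\mu(\eta)$ and variance $\mu(\eta^2)-\mu(\eta)^2\le\|\eta\|_\infty^2$; hence $\E\,\mu_n^\omega(\eta)=\mu(\eta)$ by linearity, and the cross terms vanish by independence so that $\Var\,\mu_n^\omega(\eta)=\sum_k a_{n,k}^2\,\Var\,\eta(\omega_k)\le\|\eta\|_\infty^2\sum_k a_{n,k}^2$. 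The hypothesis $\sum_{n,k}a_{n,k}^2<\infty$ then gives $\sum_n\Var\,\mu_n^\omega(\eta)<\infty$.

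For each $\varepsilon>0$, Chebyshev's inequality bounds $\P\{|\mu_n^\omega(\eta)-\mu(\eta)|\ge\varepsilon\}$ by a summable sequence in $n$, so the Borel--Cantelli lemma yields $\limsup_n|\mu_n^\omega(\eta)-\mu(\eta)|\le\varepsilon$ almost surely; intersecting over $\varepsilon=1/m$ shows $\mu_n^\omega(\eta)\to\mu(\eta)$ almost surely. Now I would use that $C(X)$ is separable because $X$ is compact metric: fix a countable dense set $\{\eta_j\}\subset C(X)$ and let $\Omega_0$ be the probability-one event on which $\mu_n^\omega(\eta_j)\to\mu(\eta_j)$ for every $j$. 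For $\omega\in\Omega_0$ and an arbitrary $\eta\in C(X)$, choose $\eta_j$ with $\|\eta-\eta_j\|_\infty<\varepsilon$; since all $\mu_n^\omega$ and $\mu$ are probability measures, $|\mu_n^\omega(\eta)-\mu_n^\omega(\eta_j)|\le\varepsilon$ and $|\mu(\eta)-\mu(\eta_j)|\le\varepsilon$, whence $\limsup_n|\mu_n^\omega(\eta)-\mu(\eta)|\le2\varepsilon$, and letting $\varepsilon\to0$ gives $\mu_n^\omega(\eta)\to\mu(\eta)$. That is exactly $\mu_n^\omega\to\mu$ in the weak-$*$ topology.

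This argument is essentially routine, so I do not expect a genuine obstacle; the only points requiring a little care are that $\mu_n^\omega$ is really a probability measure even when the range of $k$ is infinite, that the variance identity relies on the independence of the distinct coordinates $\omega_k$ (so no cross terms survive), and that the exceptional null set can be chosen independently of $\eta$ — which is precisely what the reduction to a countable dense family of test functions accomplishes.
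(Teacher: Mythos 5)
Your proof is correct and takes essentially the same route as the paper: mean and variance computation for $\mu_n^\omega(\eta)$, Chebyshev plus Borel--Cantelli for each fixed $\eta$, and separability of $C(X)$ to make the null set uniform. You merely spell out the density/approximation step that the paper leaves implicit.
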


\begin{lemma} \label{asliminflemma}
Let $(\xi_n)$ be a sequence of independent random variables.
Then almost surely
	\[
	\liminf_{n \to \infty} \xi_n \leq
	\liminf_{n \to \infty} \E \xi_n.
	\]

\begin{proof}
By taking a subsequence it may be assumed that $\lim_{n \to \infty} \E\xi_n$
exists. Let $(\varepsilon_n)$ be a sequence of positive numbers converging
to $0$, such that
	\[
	\sum_n \frac{\varepsilon_n}{1 + \varepsilon_n} = \infty.
	\]
By Markov's inequality, 
	\[
	\P\left(\xi_n \leq (1 + \varepsilon_n)\E\xi_n \right)
	=
	1 - \P\left(\xi_n > (1 + \varepsilon_n)\E\xi_n \right)
	\geq
	1 - \frac{\E \xi_n}{(1 + \varepsilon_n)\E \xi_n}
	=
	\frac{\varepsilon_n}{1 + \varepsilon_n}.
	\]
Then by Borel--Cantelli's lemma there is almost surely a strictly
increasing sequence $(n_k)$ of natural numbers such that
$\xi_{n_k} \leq (1 + \varepsilon_{n_k})\E\xi_{n_k}$ for every $k$,
and thus
	\[
	\liminf_{n \to \infty} \xi_n
	\leq
	\liminf_{k \to \infty} \xi_{n_k}
	\leq
	\liminf_{k \to \infty} \E\xi_{n_k}
	\leq
	\limsup_{n \to \infty} \E\xi_n
	=
	\liminf_{n \to \infty} \E\xi_n.
	\qedhere
	\]
\end{proof}
\end{lemma}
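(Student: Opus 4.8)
The plan is to produce, almost surely, a random subsequence along which $\xi_n$ stays below a slightly inflated version of its mean, and then take a lower limit along that subsequence. First I would pass to a subsequence of $(\xi_n)$ along which $\E\xi_n$ converges to $\liminf_n \E\xi_n$; since the lower limit of $\xi_n$ along the full sequence is at most its lower limit along any subsequence, it is enough to prove the inequality for the chosen subsequence, and I may moreover assume that $L := \lim_n \E\xi_n$ is finite, the statement being otherwise vacuous. I would also use that the variables are non-negative, which is what makes $\E\xi_n$ and Markov's inequality available (and is in any case needed for the statement to hold).

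Next I would fix a sequence $(\varepsilon_n)$ of positive numbers with $\varepsilon_n \to 0$ and $\sum_n \varepsilon_n/(1 + \varepsilon_n) = \infty$, for instance $\varepsilon_n = 1/n$. Applying Markov's inequality to $\xi_n \ge 0$ gives
\[
\P\big(\xi_n \le (1 + \varepsilon_n)\E\xi_n\big)
= 1 - \P\big(\xi_n > (1 + \varepsilon_n)\E\xi_n\big)
\ge 1 - \frac{1}{1 + \varepsilon_n}
= \frac{\varepsilon_n}{1 + \varepsilon_n}
\]
(the case $\E\xi_n = 0$ being trivial, since then $\xi_n = 0$ almost surely), so that $\sum_n \P\big(\xi_n \le (1 + \varepsilon_n)\E\xi_n\big) = \infty$. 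The events $A_n = \{\xi_n \le (1 + \varepsilon_n)\E\xi_n\}$ each depend only on the corresponding $\xi_n$ and hence are independent, so the second Borel--Cantelli lemma yields, almost surely, a strictly increasing sequence $(n_k)$ with $\xi_{n_k} \le (1 + \varepsilon_{n_k})\E\xi_{n_k}$ for every $k$.

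On this almost sure event,
\[
\liminf_{n \to \infty} \xi_n
\le \liminf_{k \to \infty} \xi_{n_k}
\le \liminf_{k \to \infty} (1 + \varepsilon_{n_k})\E\xi_{n_k}
= \lim_{n \to \infty} \E\xi_n = L,
\]
using $\varepsilon_{n_k} \to 0$ and the reduction to a convergent $\E\xi_n$; this is the desired bound. I do not anticipate a real obstacle. The only delicate point is the one-sided estimate: one needs a lower bound on $\P\big(\xi_n \le (1 + \varepsilon_n)\E\xi_n\big)$ whose sum over $n$ diverges, and Markov's inequality supplies exactly $\varepsilon_n/(1 + \varepsilon_n)$, which forces the slack $(\varepsilon_n)$ to be chosen shrinking slowly enough that this series still diverges. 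Everything else is the standard pairing of Markov's inequality with the second Borel--Cantelli lemma.
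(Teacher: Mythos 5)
Your proof is correct and follows essentially the same route as the paper's: pass to a subsequence along which $\E\xi_n$ converges, use Markov's inequality to bound $\P\left(\xi_n \leq (1+\varepsilon_n)\E\xi_n\right)$ from below by $\varepsilon_n/(1+\varepsilon_n)$, and apply the second Borel--Cantelli lemma via independence. Your explicit attention to non-negativity (left implicit in the paper, though required for Markov's inequality and satisfied in the paper's application to energies) and to the degenerate case $\E\xi_n = 0$ are sensible refinements, not a different argument.
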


\begin{proof}[Proof of Theorem~\ref{auxthm}]
By Lemma~\ref{diamVnto0lemma} it may be assumed that
$\lim_{n \to \infty} |V_n| = 0$.

Define a sequence $(\varphi_n^\omega)$ of random continuous functions on $G$ in
the following way. For each $n$, there is a probability measure $\theta_n$
on $V_n$ such that $I_t(\theta_n) \leq 2 \Cap_t(V_n)^{-1}$, and an open
subset $A_n$ of $V_n$ such that $d(A_n, V_n^c) > 0$ and
$\theta_n(A_n) \geq 1 / 2$. By
Lemma~\ref{prop3.8lemma} there is then a non-negative continuous
function $\psi_n'$ on $G$ such that
$(\psi_n'\intd\lambda)(A_n) \geq \theta_n(A_n) / 2 \geq 1 / 4$
and $I_t(\psi_n') \leq 2 I_t(\theta_n) \leq 4 \Cap_t(V_n)^{-1}$. Let $\psi_n''$ be
a continuous function on $G$ such that
$\chi_{A_n} \leq \psi_n'' \leq \chi_{V_n}$ and let
$\psi_n = c_n \psi_n' \psi_n''$, where $c_n$ is such that
$\psi_n\intd\lambda$ is a probability measure. Then $c_n \leq 4$ and
hence
	\[
	I_t(\psi_n) \leq c_n^2 I_t(\psi_n') \leq
	64 \Cap_t(V_n)^{-1},
	\]
so that the hypothesis of the theorem implies that
$\sum_n I_t(\psi_n)^{-1} = \infty$. Let
$(a_{n, k})$ be as in Lemma~\ref{clemma} with respect to
$b_n = I_t(\psi_n) \,\, (\geq |V_n|^{-t})$, and let
	\[
	\varphi_n^\omega = \sum_k a_{n, k} \psi_k^\omega,
	\]
where $\psi_k^\omega(x) = \psi_k\left(\omega_k^{-1} x\right)$.

Property~{\it\ref{itemii}} of Lemma~\ref{clemma} together with
Lemma~\ref{asconvlemma} applied in the space $(\overbar W, \restr\lambda W)$
implies that for almost every $\omega$,
	\begin{equation}\label{niceconvergence}
	\lim_{n \to \infty} \sum_k a_{n, k} \delta_{\omega_k} = \restr\lambda W.
	\end{equation}
Consider an arbitrary non-negative continuous function $\eta$ on $G$ such
that $\supp \eta \subset W$ and let $\nu = \eta\intd\lambda$ (specific choices
for $\eta$ will be made later).  
Since $\lim_{n \to \infty} |V_n| = 0$, it follows for all $\omega$ satisfying
\eqref{niceconvergence} that
$\lim_{n \to \infty} \varphi_n^\omega\intd\lambda = \restr \lambda W$
and thus
	\[
	\lim_{n \to \infty} \varphi_n^\omega \intd\nu =
	\eta \intd{\restr\lambda W} = \nu.
	\]

Let $\rho$ be a continuous function on $G$ with compact support.
Using that $\psi_i^\omega(x)$ and $\psi_j^\omega(y)$ are
independent for $i \neq j$,
	\begin{align*}
	\E I_t(\varphi_n^\omega&\rho\intd\nu) =
	\sum_k a_{n, k}^2 \E I_t(\psi_k^\omega\rho\intd\nu) \\
	&+ \sum_{i \neq j} a_{n, i} a_{n, j}
	\iint d(x, y)^{-t} \E \psi_i^\omega(x)
	\E \psi_j^\omega(y) \rho(x) \rho(y)
	\intd{\nu(x)}\!\intd{\nu(y)}.
	\end{align*}
Since $I_t(\psi_k^\omega\rho\intd\nu) \leq
\|\rho\|_\infty^2 \|\eta\|_\infty^2 I_t(\psi_k)$,
it follows from property~{\it\ref{itemiii}} of Lemma~\ref{clemma}
that the first sum converges to $0$ when $n \to \infty$.
Next,
	\[
	\E \psi_i^\omega(x) =
	\int \psi_i\left(\omega_i^{-1} x\right) \intd{\restr\lambda W(\omega_i)}
	\leq
	\int \psi_i\left(\omega_i^{-1} x\right) \intd{\lambda(\omega_i)} =
	\lambda(\psi_i) = 1,
	\]
using that $\lambda$ is invariant under right translation
and inversion. Thus the integral in the second sum is less than
or equal to $I_t(\rho\intd\nu)$ and it follows by property~{\it\ref{itemii}}
in Lemma~\ref{clemma} that the second sum is less than or equal to
$I_t(\rho\intd\nu)$ as well. Thus by Lemma~\ref{asliminflemma}, almost
surely
	\[
	\liminf_{n \to \infty}
	I_t(\varphi_n^\omega\rho\intd\nu)
	\leq
	I_t(\rho\intd\nu).
	\]
Lemma~\ref{mainlemma} applied in the space $(\overbar W, \nu)$ together
with \eqref{caplesscontent} now implies that almost surely
	\[
	\hmeas^t\left( \supp \eta \cap E(\omega) \right)
	\geq
	\frac{\lambda(\eta)^2}{I_t(\eta)}.
	\]

For $m = 1, 2, \ldots$,  let $\mathcal B_m$ be a maximal collection
of disjoint open balls in $W$ of radius $2^{-m}$. For each
$B \in \bigcup_m \mathcal B_m$, let $\eta_B$ be a non-negative continuous
function on $G$ such that $\chi_{\frac 1 2 B} \leq \eta_B \leq \chi_B$,
where $\frac 1 2 B$ is the ball concentric with $B$ having half the radius.
Since $\bigcup_m \mathcal B_m$ is countable, almost every $\omega$ is
such that whenever $B \in \mathcal B_m$ then
	\[
	\hmeas^t(B \cap E(\omega)) \geq
	\frac{\lambda(\eta_B)^2}{I_t(\eta_B)} \geq
	C 2^{-tm},
	\]
where the last inequality holds by Lemma~\ref{ballenergyestlemma} for some
constant $C$ that is independent of $m$.

Let $\omega$ be such and let $\mathcal U$ be an open subset of $W$.
Since $(G, \lambda)$ is $d$-regular, there is a positive constant $C'$
and some $m_0$ such that if $m \geq m_0$ then
	\[
	\#\{B \in \mathcal B_m; \, B \subset \mathcal U\} \geq C' 2^{dm}.
	\]
Thus for $m \geq m_0$,
	\[
	\hmeas^t(\mathcal U \cap E(\omega)) \geq
	\sum_{\substack{B \in \mathcal B_m \\ B \subset \mathcal U\phantom{_m}}}
	\hmeas^t(B \cap E(\omega)) \geq
	C C' 2^{(d - t)m},
	\]
and letting $m \to \infty$ shows that
$\hmeas^t(\mathcal U \cap E(\omega)) = \infty$.
\end{proof}
  
\section{Hausdorff content and energy of rectangles in \heis}
\label{contentcapacitysection}

The purpose of this section is to estimate the Hausdorff content and energy
of a rectangle $\crect{x}{r}$ in the Heisenberg group, up to multiplicative
constants. Only upper bounds are provided, but it follows
from~\eqref{caplesscontent}
that they are the best possible ones. The multiplicative constants will mostly
be implicit, using the following notation. If $e_1$ and $e_2$ are expressions
depending on some parameters, then $e_1 \lesssim e_2$ means that there is a
positive constant $C$ such that $e_1 \leq Ce_2$ for all parameter values.
Often some of
the parameters in $e_1$ and $e_2$ will be considered as constants\,---\,then $C$
may depend on those parameters. For example, the implicit constants always
depend on $t$.

\subsection{Upper bound for the Hausdorff content of a rectangle}

\begin{lemma} \label{contentestlemma}
For all $t\in [0,4]$,
         \[
         \hmeas_\infty^t(\crect 0 r) \lesssim \Phi^t(r),
         \] 
where the implicit constant depends on $t$ but not on $r$.
\end{lemma}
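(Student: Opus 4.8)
The plan is to assemble explicit covers of $\crect 0 r$ from two ingredients: a crude diameter estimate, and the fact that $\hmeas_\infty^t$ is monotone, countably subadditive, and invariant under left translations of $\heis$ (which are $\dheis$-isometries). For the diameter estimate, observe that if $p', p'' \in \crect 0 r$ then $(x'' - x')^2 + (y'' - y')^2 \le 4 r_1^2$, while $|z'' - z'| \le 2 r_2^2$, and since $|x' y'' - y' x''| \le |(x', y')|\,|(x'', y'')| \le r_1^2$ also $|z'' - z' - 2(x' y'' - y' x'')| \le 2 r_1^2 + 2 r_2^2$; feeding this into the formula for $\dheis$ gives $\dheis(p', p'') \le \big( 16 r_1^4 + (2 r_1^2 + 2 r_2^2)^2 \big)^{1/4}$, so that
\[
|\crect 0 r| \lesssim r_1 \vee r_2 \qquad\text{and hence}\qquad \hmeas_\infty^t(\crect 0 r) \le |\crect 0 r|^t \lesssim (r_1 \vee r_2)^t .
\]
This already settles the lemma in the two ranges where $\Phi^t(r) = (r_1 \vee r_2)^t$, namely $t \in [0, 2]$ when $r_1 \le r_2$ and $t \in [0, 3]$ when $r_1 \ge r_2$.

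For the range $r_1 \le r_2$, $t \in [2, 4]$, where $\Phi^t(r) = r_1^{t - 2} r_2^2$, the rectangle is a thin Euclidean cylinder, and I would slice it along the $z$-axis into $N = \lceil r_2^2 / r_1^2 \rceil$ slabs $S_i = \{ p'; \, x'^2 + y'^2 \le r_1^2, \, |z' - z_i| \le r_1^2 \}$, with the $z_i$ chosen so that $\bigcup_i S_i \supset \crect 0 r$. Each $S_i$ is the left translate $(0, 0, z_i) \cdot \crect 0 {(r_1, r_1)}$, hence isometric to $\crect 0 {(r_1, r_1)}$, so $|S_i| \lesssim r_1$ by the diameter estimate. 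Since $r_1 \le r_2$ forces $1 \le N \lesssim r_2^2 / r_1^2$, subadditivity gives
\[
\hmeas_\infty^t(\crect 0 r) \le \sum_{i = 1}^N |S_i|^t \lesssim \frac{r_2^2}{r_1^2}\, r_1^t = r_1^{t - 2} r_2^2 = \Phi^t(r) .
\]

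The remaining range, $r_1 \ge r_2$ and $t \in [3, 4]$ with $\Phi^t(r) = r_1^{6 - t} r_2^{2(t - 3)}$, I would reduce to the previous one. Put $s = r_2^2 / r_1$, so $s \le r_2 \le r_1$, and partition the horizontal disc $\{x'^2 + y'^2 \le r_1^2\}$ into $\lesssim (r_1 / s)^2$ Borel cells of diameter at most $s$; for a cell $C$ fix a point $(a, b) \in C$ and set $g = (a, b, 0)$. If $p' \in \crect 0 r$ has $(x', y') \in C$, then $(x' - a)^2 + (y' - b)^2 \le s^2$, and from $g^{-1} p' = \big( x' - a, \, y' - b, \, z' - 2(a y' - b x') \big)$ together with $|a y' - b x'| = |a(y' - b) - b(x' - a)| \le |(a, b)|\, s \le r_1 s = r_2^2$ one gets $|z' - 2(a y' - b x')| \le 3 r_2^2$; hence $g^{-1}$ carries the part of $\crect 0 r$ lying over $C$ into $\crect 0 {(s, \sqrt 3\, r_2)}$, a rectangle of the type handled above since its first parameter $s$ is $\le \sqrt 3\, r_2$. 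Using left-invariance, subadditivity, and the bound from the previous step (applicable as $t \in [3,4] \subset [2,4]$),
\[
\hmeas_\infty^t(\crect 0 r) \le \sum_C \hmeas_\infty^t\big( \crect 0 {(s, \sqrt 3\, r_2)} \big) \lesssim \Big( \frac{r_1}{s} \Big)^2 s^{t - 2} r_2^2 = r_1^{6 - t} r_2^{2(t - 3)} = \Phi^t(r)
\]
after inserting $s = r_2^2 / r_1$. Continuity of $t \mapsto \Phi^t(r)$ at $t = 2$ and $t = 3$ makes the overlapping ranges consistent, so all $t \in [0, 4]$ are covered.

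The step I expect to be the crux is this final reduction, and in particular the choice $s = r_2^2 / r_1$: it is exactly the horizontal scale at which the ``twist'' $2(a y' - b x')$ acquired by a Heisenberg translation across a cell of radius $s$ becomes comparable to the vertical half-height $r_2^2$ of the rectangle, and it is the scale that makes the translated local piece fit inside a rectangle whose first parameter is no larger than its second. Getting this balance wrong — or using Euclidean translations in place of the group translations $g^{-1}$ — would fail to produce the sharp exponent. This is precisely the point at which the subdivision is genuinely forced by the non-commutativity of $\heis$, through the symplectic form $x y'' - y x''$ and the elementary bound $|x y'' - y x''| \le |(x, y)|\, |(x'', y'')|$ that keeps it under control.
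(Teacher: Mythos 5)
Your proof is correct, and for the decisive case $r_1 \geq r_2$, $t \in [3,4]$ it takes a genuinely different route from the paper. The first two cases coincide in substance with the paper's (the diameter bound, and a cover of the thin cylinder by $\lceil r_2^2/r_1^2\rceil$ pieces of diameter $\lesssim r_1$, which the paper phrases as a $2r_1$-dense set of $\lesssim r_2^2/r_1^2$ points on the vertical axis). For the last case the paper decomposes $\crect 0 r$ into annuli $A_\rho$ around the vertical axis with $\rho_k = r_2\sqrt k$, exploits rotational invariance of $\dheis$ and the geometry of the horizontal planes $H(p)$ to show that the circle $C_\rho$ is $\sqrt 2\, r_2^2/\rho$-dense in $A_\rho$, and then sums covers whose mesh $r_2^2/\rho$ varies with the distance to the axis; you instead tile the base disc by cells of the single scale $s = r_2^2/r_1$ and use Heisenberg left translations $g^{-1}$ with $g = (a,b,0)$ to recenter each vertical fibre of the rectangle into $\crect 0 {(s,\sqrt 3\, r_2)}$, which falls under the already-proved thin case since $s \leq r_2$. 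Your renormalization step is sound: the identity $a y' - b x' = a(y'-b) - b(x'-a)$ and the bound $|a(y'-b)-b(x'-a)| \leq |(a,b)|\,|(x'-a,y'-b)| \leq r_1 s = r_2^2$ do exactly the work that the paper's plane-geometry sublemma does, and the arithmetic $(r_1/s)^2\, s^{t-2} r_2^2 = r_1^{6-t} r_2^{2(t-3)}$ checks out. What each approach buys: yours is more modular and elementary, reducing the hard case to the easy one by a group-theoretic rescaling and making transparent why the critical horizontal scale is $r_2^2/r_1$ (the scale at which the twist term reaches the vertical height); the paper's annular cover is adapted to the rotational symmetry and is locally more economical near the axis (pieces of diameter $r_2^2/\rho$ rather than uniformly $r_2^2/r_1$), though for $t \in [3,4]$ the outermost annuli dominate and both covers yield the same bound.
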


\begin{proof}\let\qed\relax 
Let $R = \crect 0 r$. It is enough to show that if $r_1 \leq r_2$, then
	\[
	\hmeas_\infty^t(R) \lesssim
	\begin{cases}
	r_2^t & \text{if } t \in [0, 2],\\
	r_1^{t - 2} r_2^2& \text{if } t \in [2, 4],
	\end{cases}
	\]
and if $r_1 \geq r_2$, then 
	\[
	\hmeas_\infty^t(R) \lesssim
	\begin{cases}
	r_1^t & \text{if } t \in [0, 3], \\
	r_1^{6 - t} r_2^{2(t - 3)} & \text{if } t \in [3, 4].
	\end{cases}
	\]

For $t \geq 0$,
	\[
	\hmeas_\infty^t(R) \leq |R|^t \lesssim
	\max\left(r_1^t, r_2^t\right) =
	\begin{cases}
	r_2^t & \text{if } r_1 \leq r_2, \\
	r_1^t & \text{if } r_1 \geq r_2.
	\end{cases}
	\]
The vertical segment $S = \{0\} \times \left[-r_2^2, r_2^2\right]$
is $r_1$-dense in $R$, and
	\[
	D = \left\{(0, kr_1^2); \,
	k = -\floor{\frac{r_2^2}{r_1^2}}, \ldots, \floor{\frac{r_2^2}{r_1^2}}
	\right\}
	\]
is $r_1$-dense in $S$\,---\,thus $D$ is $2r_1$-dense in $R$. If $r_1 \leq r_2$
then $\# D \lesssim r_2^2 / r_1^2$, and hence
	\[
	\hmeas_\infty^t(R) \lesssim
	\frac{r_2^2}{r_1^2} \cdot r_1^t = r_1^{t - 2} r_2^2.
	\]

It remains to show that $\hmeas_\infty^t(R) \lesssim r_1^{6 - t} r_2^{2(t - 3)}$
for $r_1 \geq r_2$ and $t \in [3, 4]$. This is done by estimating the
Hausdorff content of annuli of the form
	\[
	A_\rho = \left\{
	(x, y, z); \, \left|\left(x^2 + y^2\right)^{1/2} - \rho\,\right|
	\leq \frac{r_2^2}{2\rho},
	\,\,
	|z| \leq r_2^2
	\right\}.
	\]
Let
	\[
	C_\rho = \{ (x, y, 0); \, x^2 + y^2 = \rho^2 \}.
	\]
\end{proof}

\begin{sublemma}
For $\rho \geq r_2$, the set $C_\rho$ is $\sqrt 2 r_2^2 / \rho$-dense
in $A_\rho$.

\begin{proof}
Let $p = (\rho, 0, 0)$ and let 
	\[
	S = \{(x, y, z); \, x \geq 0, \, z = 2\rho y\};
	\]
this is the part of the plane $H(p)$ defined in the introduction
where the $x$-coordinate is non-negative. Let
$q = (x, y, z)$ be a point in $A_\rho \cap S$. Then
	\[
	x \leq \rho + \frac{r_2^2}{2\rho}
	\qquad\qquad\text{and}\qquad\qquad
	|y| = \frac{|z|}{2\rho} \leq \frac{r_2^2}{2\rho},
	\]
and
	\[
	x^2 + y^2 \geq \left(\rho - \frac{r_2^2}{2\rho}\right)^2,
	\]
so that
	\begin{align*}
	x \geq \left(
	\left( \rho - \frac{r_2^2}{2\rho} \right)^2 - y^2
	\right)^{1 / 2}
	&\geq
	\left(
	\left( \rho - \frac{r_2^2}{2\rho} \right)^2 - \left(\frac{r_2^2}{2\rho}
        \right)^2
	\right)^{1 / 2} \\
	&=
	\left(\rho^2 - r_2^2 \right)^{1 / 2}
	\geq
	\rho - \frac{r_2^2}{\rho}
	\end{align*}
(the last inequality is proved by squaring both sides and using
that $\rho \geq r_2$). Thus
	\[
	\dheis(p ,q) = \left((x - \rho)^2 + y^2 \right)^{1 / 2}
	\leq \sqrt 2 \max\left(|x - \rho|, |y| \right)
	\leq \frac{\sqrt 2 r_2^2}{\rho}.
	\]
Let $R_\alpha$ be the rotation by $\alpha$ around the vertical axis
$\{0\} \times \rea$. The statement follows since $\dheis$ is invariant
under $R_\alpha$ and
	\[
	C_\rho = \bigcup_\alpha R_\alpha(p)
	\qquad\quad\text{and}\quad\qquad
	A_\rho = \bigcup_\alpha R_\alpha\left( A_\rho \cap S \right).
	\qedhere
	\]
\end{proof}
\end{sublemma}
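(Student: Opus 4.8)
The plan is to exploit the rotational symmetry of both $C_\rho$ and $A_\rho$ about the vertical axis, reducing the claim to a single point of $C_\rho$ and a single ``slice'' of $A_\rho$. Let $R_\alpha$ denote the rotation by angle $\alpha$ about the vertical axis $\{0\} \times \rea$. Rotations about the centre of the Heisenberg group are isometries of $(\heis, \dheis)$, and both $C_\rho$ and $A_\rho$ are invariant under every $R_\alpha$, with $C_\rho = \bigcup_\alpha R_\alpha(p)$ for $p = (\rho, 0, 0) \in C_\rho$. The associated horizontal plane is $H(p) = \{(x, y, z); \, z = 2\rho y\}$, and on $H(p)$ the Heisenberg distance to $p$ equals the Euclidean distance $((x - \rho)^2 + y^2)^{1/2}$, by the equality case of the inequality $\dheis(p, p') \geq ((x' - x)^2 + (y' - y)^2)^{1/2}$ recorded in the introduction. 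I would work with the half-plane $S = \{(x, y, z); \, x \geq 0, \, z = 2\rho y\} \subset H(p)$.

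First I would check that $A_\rho = \bigcup_\alpha R_\alpha(A_\rho \cap S)$, so that every point of $A_\rho$ is a rotate of a point of $A_\rho \cap S$. Since $R_\alpha$ fixes the $z$-coordinate and acts on $(x, y)$ by a Euclidean rotation, a point $q = (x, y, z) \in A_\rho$ can be rotated into $S$ exactly when the circle of radius $(x^2 + y^2)^{1/2}$ in the horizontal plane at height $z$ meets the line $\{2\rho y' = z, \, x' \geq 0\}$, that is, when $|z|/(2\rho) \leq (x^2 + y^2)^{1/2}$. This follows from $|z| \leq r_2^2$ and $(x^2 + y^2)^{1/2} \geq \rho - r_2^2/(2\rho)$ together with the hypothesis $\rho \geq r_2$, which gives $r_2^2/(2\rho) \leq \rho - r_2^2/(2\rho)$; and since $A_\rho$ is rotation invariant, the rotate still lies in $A_\rho \cap S$.

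It then remains to bound $\dheis(p, q)$ for $q = (x, y, z) \in A_\rho \cap S$. As $q \in H(p)$, this distance equals $((x - \rho)^2 + y^2)^{1/2} \leq \sqrt 2 \max(|x - \rho|, |y|)$, so it suffices to prove $|x - \rho| \leq r_2^2/\rho$ and $|y| \leq r_2^2/\rho$. The second is immediate from $2\rho|y| = |z| \leq r_2^2$. For the first, the outer constraint gives $x \leq (x^2 + y^2)^{1/2} \leq \rho + r_2^2/(2\rho)$, while the inner constraint together with $|y| \leq r_2^2/(2\rho)$ gives $x^2 \geq (\rho - r_2^2/(2\rho))^2 - (r_2^2/(2\rho))^2 = \rho^2 - r_2^2$, whence $x \geq (\rho^2 - r_2^2)^{1/2} \geq \rho - r_2^2/\rho$ (square both sides, using $\rho \geq r_2$). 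Combining the two inequalities yields $|x - \rho| \leq r_2^2/\rho$, completing the estimate.

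The one point requiring care is the symmetry reduction: one must verify that \emph{every} point of $A_\rho$ rotates into the chosen slice $S$, and this is precisely where the hypothesis $\rho \geq r_2$ is used\,---\,it guarantees the annulus is wide enough in radius relative to its thickness that $S$ sweeps out all of $A_\rho$ under rotation. Everything else reduces to a handful of one-variable inequalities.
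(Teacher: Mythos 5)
Your proof is correct and takes essentially the same route as the paper's: reduce by rotational symmetry to the slice $A_\rho \cap S$ of the half-plane $S \subset H(p)$, where the Heisenberg distance to $p=(\rho,0,0)$ is Euclidean, and bound $|x-\rho|$ and $|y|$ there by $r_2^2/\rho$. The only difference is that you explicitly verify that every point of $A_\rho$ can be rotated into $S$ (which is where $\rho \geq r_2$ enters), a decomposition the paper states without proof.
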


\begin{sublemma}
Let $\varepsilon > 0$. The set
	\[
	D_\rho = \left\{
	\left(\rho \cos\left( \frac{k \varepsilon^2}{2\rho^2}\right), \,
	\rho \sin\left( \frac{k \varepsilon^2}{2\rho^2}\right), \, 0 \right);
	\,
	k = 0, \ldots, \floor{\frac{4\pi\rho^2}{\varepsilon^2}}
	\right\}
	\]
is $\varepsilon$-dense in $C_\rho$.

\begin{proof}
The points $p = (\rho, 0, 0)$ and $q = (\rho\cos \alpha, \rho \sin \alpha, 0)$
satisfy
	\begin{align*}
	\dheis(p, q) &= \rho \left(
	\left(\left(1 - \cos\alpha\right)^2 + \sin^2 \alpha \right)^2
	+ \left(2\sin \alpha\right)^2
	\right)^{1 / 4} \\
	&=
	2^{3 / 4} \rho
	\left(1 - \cos\alpha \right)^{1 / 4}
	\leq
	\rho |2\alpha|^{1 / 2},
	\end{align*}
using that $\cos\alpha \geq 1 - \alpha^2 / 2$. The same bound holds for
any pair of points $p, q$ on $C_\rho$ making an angle $\alpha$, and taking
$\alpha = \varepsilon^2 / 2\rho^2$ gives $\dheis(p, q) \leq \varepsilon$.
\end{proof}
\end{sublemma}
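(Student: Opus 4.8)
The plan is to reduce the claim to a single distance computation on $C_\rho$ by exploiting the rotational symmetry of $\dheis$ about the vertical axis, and then to finish with the elementary inequality $\cos\alpha \geq 1 - \alpha^2/2$. I regard the argument as essentially routine; the only points requiring care are the bookkeeping of the angular gaps and the invocation of rotation-invariance.

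First I would set up the angular parametrisation. The set $C_\rho$ is the circle of radius $\rho$ in the horizontal plane $\{z = 0\}$, and the points of $D_\rho$ sit at the angles $\theta_k = k\varepsilon^2/(2\rho^2)$ for $k = 0, \ldots, \floor{4\pi\rho^2/\varepsilon^2}$, so consecutive points are separated by the fixed angle $\varepsilon^2/(2\rho^2)$. Since $\floor{4\pi\rho^2/\varepsilon^2} \cdot \varepsilon^2/(2\rho^2) \leq 2\pi$ and exceeds $2\pi - \varepsilon^2/(2\rho^2)$, the largest index produces an angle at most $2\pi$ whose gap back to $\theta_0 = 0$, measured around the circle, is smaller than $\varepsilon^2/(2\rho^2)$. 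Taking this wrap-around gap into account, every arc between consecutive points of $D_\rho$ has angular length at most $\varepsilon^2/(2\rho^2)$, and hence every point of $C_\rho$ lies at angular distance at most $\varepsilon^2/(2\rho^2)$ from some point of $D_\rho$. It therefore suffices to bound $\dheis$ between two points of $C_\rho$ subtending an angle $\alpha$ with $|\alpha| \leq \varepsilon^2/(2\rho^2)$.

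Next I would compute the distance for the representative pair $p = (\rho, 0, 0)$ and $q = (\rho\cos\alpha, \rho\sin\alpha, 0)$, using the explicit formula for $\dheis$ from the introduction. With $xy' - yx' = \rho^2\sin\alpha$ and $z = z' = 0$, the fourth power of the distance equals $\rho^4\bigl(((1-\cos\alpha)^2 + \sin^2\alpha)^2 + 4\sin^2\alpha\bigr)$; applying the identity $(1-\cos\alpha)^2 + \sin^2\alpha = 2(1-\cos\alpha)$ reduces this to $8\rho^4(1-\cos\alpha)$, so that $\dheis(p, q) = 2^{3/4}\rho(1-\cos\alpha)^{1/4}$. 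Because $\dheis$ is invariant under the rotation $R_\alpha$ about the vertical axis\,---\,the same fact already used in the preceding sublemma\,---\,any pair of points on $C_\rho$ making angle $\alpha$ is carried to this representative pair, so the formula holds for all such pairs.

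Finally I would apply $1 - \cos\alpha \leq \alpha^2/2$ to obtain $\dheis(p, q) \leq 2^{3/4}\rho(\alpha^2/2)^{1/4} = \rho\,|2\alpha|^{1/2}$, and substitute the angular bound $|\alpha| \leq \varepsilon^2/(2\rho^2)$ to conclude $\dheis(p, q) \leq \rho\,(\varepsilon^2/\rho^2)^{1/2} = \varepsilon$. This gives the $\varepsilon$-density of $D_\rho$ in $C_\rho$. I do not expect any genuine obstacle here: the trigonometric identity and the cosine bound are standard, and the only real content is verifying that the floor in the index range, together with the wrap-around gap, really leaves no arc longer than $\varepsilon^2/(2\rho^2)$.
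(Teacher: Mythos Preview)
Your argument is correct and follows the same route as the paper: compute $\dheis$ for two points on $C_\rho$ separated by angle $\alpha$ via the explicit norm formula, simplify to $2^{3/4}\rho(1-\cos\alpha)^{1/4}$, bound with $\cos\alpha \geq 1 - \alpha^2/2$, and then take $\alpha \leq \varepsilon^2/(2\rho^2)$. The only difference is cosmetic\,---\,you spell out the angular bookkeeping (including the wrap-around gap) and the monotonicity in $|\alpha|$, whereas the paper leaves these implicit and simply plugs in $\alpha = \varepsilon^2/(2\rho^2)$.
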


\begin{proof}[Proof of Lemma~\ref{contentestlemma} (continued)]
Take $\varepsilon = r_2^2 / \rho$ in the definition of $D_\rho$.
Then for $\rho \geq r_2$ the set $D_\rho$ is $3r_2^2/\rho$-dense
in $A_\rho$ and $\# D_\rho \lesssim \rho^4 / r_2^4$, and thus
	\[
	\hmeas_\infty^t(A_\rho) \lesssim \frac{\rho^4}{r_2^4}
	\cdot \left( \frac{r_2^2}{\rho} \right)^t
	= r_2^{2t - 4} \rho^{4 - t}.
	\]
Let $\rho_k = r_2 \sqrt k$. Then
	\[
	\rho_{k + 1} - \rho_k =
	r_2 \int_k^{k + 1} \frac{1}{2 \sqrt u} \intd u
	\leq \frac{r_2}{2 \sqrt k} = \frac{r_2^2}{2 \rho_k},
	\]
so that $A_{\rho_k}$ and $A_{\rho_{k + 1}}$ overlap. Thus
	\[
	R \subset \crect{0}{(r_2, r_2)} \cup
	\bigcup_{k = 1}^{\ceil{\left(\frac{r_1}{r_2}\right)^2}} A_{\rho_k}.
	\]
It follows that
	\begin{align*}
	\hmeas_\infty^t(R) &\lesssim
	r_2^t +
	\sum_{k = 1}^{\ceil{\left(\frac{r_1}{r_2}\right)^2}}
	r_2^{2t - 4} \rho_k^{4 - t}
	=
	r_2^t \left( 1 +
	\sum_{k = 1}^{\ceil{\left(\frac{r_1}{r_2}\right)^2}}
	k^{(4 - t)/2}  \right) \\
	&\lesssim
	r_2^t \left( 1 +
	\left( \frac{r_1}{r_2} \right)^{6 - t} \right)
	\lesssim
	r_1^{6 - t}r_2^{2(t - 3)},
	\end{align*}
using in the last step that $(r_1 / r_2)^{6 - t} \geq 1$. This completes the
proof of Lemma~\ref{contentestlemma}.
\end{proof}

\subsection{Upper bound for the energy of a rectangle}

\begin{lemma} \label{energyestlemma}
For all $t\in(0,4)\setminus\{1,2,3\}$,
        \[
        \Phi^t(r) \lesssim \Cap_t\left(\crect x r \right),
        \]
where the implicit constant depends on $t$ but not on $r$.
\end{lemma}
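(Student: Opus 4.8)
The plan is to use that $\Cap_t(A)\ge I_t(\mu)^{-1}$ for any probability measure $\mu$ on $A$, so that, together with the matching upper bound $\Cap_t(\crect xr)\le\hmeas_\infty^t(\crect xr)\lesssim\Phi^t(r)$ coming from \eqref{caplesscontent} and Lemma~\ref{contentestlemma}, it is enough to construct, for each $x$ and $r$, a probability measure $\mu$ on $\crect xr$ with $I_t(\mu)\lesssim\Phi^t(r)^{-1}$. By left-invariance of $\dheis$ we may take $x=0$, and since the Heisenberg dilations $(x,y,z)\mapsto(sx,sy,s^2z)$ scale $\dheis$ by $s$ and send $\crect0{(r_1,r_2)}$ to $\crect0{(sr_1,sr_2)}$, we may normalise one of $r_1,r_2$ in each case. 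I would split into $r_1\le r_2$ and $r_1\ge r_2$, each time with a sub-case straddling $t=2$, resp.\ $t=3$.

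For $r_1\le r_2$: if $t\in(0,2)$, take $\mu$ uniform on the vertical segment $\{0\}\times\{0\}\times[-r_2^2,r_2^2]\subset\crect0r$, on which $\dheis$ equals $|z-z'|^{1/2}$; rescaling gives $I_t(\mu)\asymp r_2^{-t}\iint_{[-1,1]^2}|u-v|^{-t/2}\intd u\intd v\lesssim r_2^{-t}=\Phi^t(r)^{-1}$, the integral converging because $t/2<1$. If $t\in(2,4)$, take $\mu$ uniform on $\crect0r$; then $I_t(\mu)\le\lambda(\crect0r)^{-1}\sup_p\int_{\crect0r}\dheis(p,q)^{-t}\intd{\lambda(q)}$, and by left- and right-invariance of $\lambda$ the inner integral equals $\int_{\crect{p^{-1}}r}\|q\|^{-t}\intd{\lambda(q)}$ with $0\in\crect{p^{-1}}r$. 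In the coordinates $\bigl((q_x^2+q_y^2)^{1/2},\arg(q_x,q_y),q_z\bigr)$ the radial variable stays in a set of length $\lesssim r_1$ while for each fixed horizontal position $q_z$ runs over an interval of length $2r_2^2$; bounding the $q_z$-integral by the one over the centred interval (the integrand being even and decreasing in $q_z$) and then integrating in the horizontal variables yields $\int_{\crect{p^{-1}}r}\|q\|^{-t}\intd{\lambda(q)}\lesssim\int_0^{r_1}s^{3-t}\intd s\lesssim r_1^{4-t}$ (using $2<t<4$), so $I_t(\mu)\lesssim(r_1^2r_2^2)^{-1}r_1^{4-t}=r_1^{t-2}r_2^{-2}=\Phi^t(r)^{-1}$.

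For $r_1\ge r_2$ with $t\in(0,3)$: normalise $r_1=1$, so $\crect0r$ contains the planar annulus $D=\{(x,y,0);\ \frac14\le x^2+y^2\le1\}$ in the horizontal plane $H(0)$, crucially avoiding its single characteristic point $0$. On $D$ one computes, for $p=(x_0,0,0)$ with $x_0\asymp1$, $\dheis\bigl(p,(x,y,0)\bigr)^4=((x-x_0)^2+y^2)^2+4x_0^2y^2\asymp(u^2+|y|)^2$ with $u=x-x_0$, whence $\int_D\dheis(p,q)^{-t}\intd{\lambda(q)}\lesssim\iint(u^2+|y|)^{-t/2}\intd u\intd y<\infty$ for $t<3$. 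Thus the uniform measure on $D$ has finite $t$-energy, $\Cap_t(D)$ is a positive constant depending only on $t$, and undilating gives $\Cap_t(\crect0{(r_1,r_2)})\ge r_1^t\Cap_t(D)\gtrsim r_1^t=\Phi^t(r)$. (Avoiding the characteristic point is essential: the full disc has infinite $t$-energy for every measure once $t>2$.)

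The case $r_1\ge r_2$, $t\in(3,4)$ is the main obstacle, since no single measure suffices: the value $\Phi^t(r)=r_1^{6-t}r_2^{2t-6}$ is only attained by summing over all the annuli $A_{\rho_k}$ from the proof of Lemma~\ref{contentestlemma}, with $\rho_k=r_2\sqrt k$ and $1\le k\le K\asymp(r_1/r_2)^2$. I would take $\mu=\sum_k\lambda_k\mu_k$ with $\mu_k$ uniform on $A_{\rho_k}$ and $\lambda_k\propto\Cap_t(A_{\rho_k})\asymp r_2^{2t-4}\rho_k^{4-t}$; the polar estimate above applied to a single annulus gives $I_t(\mu_k)\asymp\Cap_t(A_{\rho_k})^{-1}$, so the diagonal part of the energy is $\sum_k\lambda_k^2I_t(\mu_k)\asymp\bigl(\sum_k\Cap_t(A_{\rho_k})\bigr)^{-1}\asymp\Phi^t(r)^{-1}$. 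The delicate point is the cross-energy $I_{j,k}=\iint\dheis(p,q)^{-t}\intd{\mu_j(p)}\intd{\mu_k(q)}$ for well-separated concentric annuli $\rho_j<\rho_k$: the naive bound $(\rho_k-\rho_j)^{-t}$ coming from $\dheis\ge$ horizontal distance is far too weak (it produces a divergent sum), so one must exploit that the Heisenberg vertical coordinate of $p$ relative to $q$ sweeps a range of size $\sim\rho_j\rho_k$ as the angular variable runs over $[0,2\pi)$; integrating out the vertical variable over its interval of length $2r_2^2$ first, and then the angular and radial variables, yields the sharper estimate $I_{j,k}\lesssim(\rho_k-\rho_j)^{2-t}(\rho_j\rho_k)^{-1}$. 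Substituting $\rho_k=r_2\sqrt k$ and changing variables to $\sqrt j,\sqrt k$ then reduces $\sum_{j<k}\lambda_j\lambda_kI_{j,k}\lesssim\Phi^t(r)^{-1}$ to the elementary estimate $\iint_{1\le u<v\le\sqrt K}(uv)^{4-t}(v-u)^{2-t}\intd u\intd v\lesssim K$ over the separated region, valid for $3<t<4$ ($t=3$ being exactly where logarithms enter, consistently with the hypothesis); the finitely many "touching" pairs $|j-k|=O(1)$ are absorbed into the diagonal term since there $I_{j,k}\lesssim I_t(\mu_k)$. I expect this cross-energy bound for separated annuli to be the genuinely hard step.
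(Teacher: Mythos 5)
Your proposal is correct in outline but takes a genuinely different route from the paper, and the difference is instructive. The paper uses a single measure in every case, namely the normalised Lebesgue measure on $\crect{0}{r}$, and obtains the bound from $\Cap_t(R)\ge\lambda(R)^2/I_t(R)$ by estimating the potential $R_t(p)=\int_R\dheis(p,q)^{-t}\intd{\lambda(q)}$ \emph{as a function of the horizontal radius $\rho$ of $p$} and then integrating $g_t(\rho)\,\rho\intd\rho$ over $[0,r_1]$. In the critical case $r_1\ge r_2$, $t\in(3,4)$ the potential is genuinely non-uniform ($R_t(p)\lesssim r_2^{2(4-t)}\rho^{t-4}$ away from the axis), but since only the total energy enters the capacity bound, the $\rho$-integration performs automatically the bookkeeping that your annulus decomposition does by hand, and no cross-energy estimates are needed. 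Your cases $r_1\le r_2$, $t<2$ (vertical segment), $r_1\le r_2$, $t>2$ (uniform measure with a uniform potential bound $\lesssim r_1^{4-t}$) and $r_1\ge r_2$, $t<3$ (planar annulus avoiding the characteristic point, then dilating) are complete and correct, and the third is arguably cleaner than the paper's computation.

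Where your write-up falls short of a proof is exactly the case you flag as hard, and three ingredients would still have to be supplied. First, the single-annulus bound $\Cap_t(A_{\rho})\gtrsim r_2^{2t-4}\rho^{4-t}$ for $t\in(3,4)$: your ``polar estimate above'' was carried out for the planar annulus and $t<3$ and does not apply to the thickened three-dimensional annulus; one needs a separate potential estimate (which does work out, giving $\sup_p\int_{A_\rho}\dheis(p,q)^{-t}\intd{\lambda(q)}\lesssim(r_2^2/\rho)^{4-t}$). Second, the cross-energy bound $I_{j,k}\lesssim(\rho_k-\rho_j)^{2-t}(\rho_j\rho_k)^{-1}$ is, I believe, true for genuinely separated annuli --- the group vertical offset at angular separation $\theta$ is $\approx 2\rho_j\rho_k\theta$ up to the $O(r_2^2)$ vertical thickness, and integrating out the vertical variable first and the angle second uses $t>2$ exactly as you say --- but the radial thicknesses $r_2^2/\rho$ matter: consecutive $A_{\rho_k}$ were built to overlap, so ``separated'' must mean $k-j$ exceeding an absolute constant, not merely $j\ne k$. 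Third, absorbing the touching pairs into the diagonal cannot be done by Cauchy--Schwarz for the energy form (positive definiteness of the kernel $\dheis^{-t}$ is not available here), so it too needs a direct potential estimate. None of these is fatal --- the exponents are consistent, and your reduction to $\iint_{1\le u<v\le\sqrt K}(uv)^{4-t}(v-u)^{2-t}\intd u\intd v\lesssim K$ over the separated region is the right elementary inequality --- but the paper's choice of the plain Lebesgue measure on the whole rectangle avoids all of it.
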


\begin{proof}\let\qed\relax 
Let $R = \crect 0 r$. Since
        \[
	\Cap_t(R) \geq \frac{\lambda(R)^2}{I_t(R)},
	\]
it is enough to show that if $r_1 \leq r_2$, then
	\[
	I_t(R) \lesssim
	\begin{cases}
	r_1^4r_2^{4 - t} & \text{if } t \in (0, 2), \\
	r_1^{6 - t}r_2^2 & \text{if } t \in (2, 4),
	\end{cases}
	\]
and if $r_1 \geq r_2$, then
	\[
	I_t(R) \lesssim
	\begin{cases}
	r_1^{4 - t}r_2^4 & \text{if } t \in (0, 3) \setminus \{ 1 \}, \\
	r_1^{t - 2}r_2^{2(5 - t)} & \text{if } t \in (3, 4).
	\end{cases}
	\]

Let
	\[
	R_t(p) = \int_R \dheis(p, q)^{-t} \intd{\lambda(q)},
	\]
so that
	\[
	I_t(R) = \int_R R_t(p) \intd{\lambda(p)}.
	\]
Since $R$, \dheis and $\lambda$ are invariant under rotation around the vertical
axis, the integral defining $R_t(p)$ does not depend on the angle of $p$ in the
horizontal plane. To estimate $R_t(p)$ it is therefore enough to
consider $p$ of the form $p = (\rho, 0, z_0)$. Assume that $\rho \in [0, r_1]$
and $z_0 \in [-r_2^2, r_2^2]$, so that $p \in R$.

Let
	\[
	f_\rho(x, y, z) = \max\left(
	|x|, |y|, |z - 2 \rho y|^{1 / 2}
	\right).
	\]
Then for $q = (x, y, z)$,
	\[
	\dheis(p, q) = \left( \left((x - \rho)^2 + y^2\right)^2 +
	(z - z_0 - 2\rho y)^2 \right)^{1 / 4}
	\approx
	f_\rho(x - \rho, y, z - z_0).
	\]
Define the Euclidean rectangles
	\begin{align*}
	&A = \big[-2r_1, 2r_1\big] \times \big[-2r_1, 2r_1\big] \times
	\big[-2 r_2^2, 2 r_2^2\big], \\
	&A' = (\rho, 0, z_0) + A,
	\end{align*}
where $+$ means Euclidean translation, and let
$B(a) = \{q; \, f_\rho(q) \leq a\}$. Since $R \subset A'$,
	\begin{align}
	\nonumber
	R_t(p) &\leq
	\int_{A'} \dheis(p, q)^{-t} \intd{\lambda(q)}
	\lesssim
	\int_{A'} f_\rho(x - \rho, y, z - z_0)^{-t} \intd{\lambda(x, y, z)} \\
	\label{Ptesteq}
	&=
	\int_{A} f_\rho(x, y, z)^{-t} \intd{\lambda(x, y, z)}
	=
	\int_0^\infty \lambda\left\{
	q \in A; \, f_\rho(q)^{-t} \geq \gamma \right\} \intd \gamma \\
	\nonumber
	&=
	\int_0^\infty
	\lambda\left(A \cap B\left(\gamma^{-1/t}\right)\right) \intd \gamma
	\approx
	\int_0^\infty \lambda\left(A \cap B(a)\right) a^{-(t + 1)} \intd a.
	\end{align}
To estimate $R_t(p)$ it is useful to have an upper bound
for $\lambda\left(A \cap B(a)\right)$.

The set $B(a)$ is the intersection
of the vertical cylinder $[-a, a] \times [-a, a] \times \rea$
with the set of points having vertical Euclidean distance at most $a^2$
to the plane $z = 2\rho y$. In particular, the projection of $B(a)$ to
the $yz$-plane is the intersection of the strips
	\[
	S_1 = \{(y, z); \, -a \leq y \leq a\},
	\qquad\qquad
	S_2 = \{(y, z); \, 2\rho y - a^2 \leq z \leq 2\rho y + a^2 \}.
	\]
The projection of $A$ to the $yz$-plane is the intersection of the strips
	\[
	S_3 = \{(y, z); \, -2r_1 \leq y \leq 2r_1\},
	\qquad\qquad
	S_4 = \{(y, z); \, -2r_2^2 \leq z \leq 2r_2^2\}.
	\]

It is used in the computations below that if $u \neq -1$
and $0 \leq v \leq w \leq \infty$ then
	\begin{equation} \label{intesteq}
	\int_v^w a^u \intd a \lesssim
	\max\left(v^{u + 1}, w^{u + 1} \right),
	\end{equation}
with the convention that $1 / 0 = \infty$, and where the implicit constant
depends on $u$ but not on $v$, $w$.
\end{proof}

\begin{sublemma}
It holds that $\lambda(A \cap B(a)) \lesssim \min
\left(a^4, \, r_1^2a^2, \, r_1^2r_2^2 \right)$.

\begin{proof}
The projection of $A \cap B(a)$ to the $yz$-plane is contained in
each of the parallelograms
	\[
	S_1 \cap S_2, \qquad\qquad
	S_3 \cap S_4, \qquad\qquad
	S_2 \cap S_3.
	\]
The first two have area $4a^3$ and $16 r_1r_2^2$, respectively,
and the third has vertices
	\[
	\left(-2r_1, -4\rho r_1 \pm a^2 \right),
	\qquad\qquad
	\left(2r_1, 4\rho r_1 \pm a^2 \right),
	\]
and hence area $8r_1a^2$. The extension of $A$ in the $x$-direction
is $4r_1$ and the extension of $B(a)$ in the $x$-direction is $2a$.
Thus
	\[
	\lambda(A \cap B(a)) \lesssim
	\min\left(a^3, \, r_1r_2^2, r_1a^2\right) \cdot
	\min\left(r_1, a\right)
	\leq
	\min\left(a^4, \, r_1^2a^2, \, r_1^2r_2^2 \right). \qedhere
	\]
\end{proof}
\end{sublemma}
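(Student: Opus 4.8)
The plan is to exploit the product structure of the coordinate splitting $\rea^3 = \rea_x \times \rea_{yz}$. The condition $f_\rho(x,y,z)\le a$ asks precisely that $|x|\le a$ and that $(y,z)$ lies in $S_1\cap S_2$, while the box $A$ factors as $[-2r_1,2r_1]$ times its $yz$-projection $S_3\cap S_4$; hence $A\cap B(a)$ is a Cartesian product of the interval $[-2r_1,2r_1]\cap[-a,a]$ in the $x$-variable with the planar region $P = S_1\cap S_2\cap S_3\cap S_4$. By Fubini,
\[
\lambda(A\cap B(a)) = \min(4r_1,2a)\cdot\operatorname{area}(P) \lesssim \min(r_1,a)\cdot\operatorname{area}(P).
\]

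First I would bound $\operatorname{area}(P)$ by discarding constraints: $P$ is contained in each of $S_1\cap S_2$, $S_3\cap S_4$ and $S_2\cap S_3$. The first is a parallelogram of $y$-width $2a$ and constant vertical thickness $2a^2$, of area $4a^3$; the second is the axis-parallel rectangle of side lengths $4r_1$ and $4r_2^2$, of area $16r_1r_2^2$; the third is a parallelogram sheared only in the $z$-direction, with $y$-width $4r_1$, vertical thickness $2a^2$ and vertices $(\pm 2r_1,\pm 4\rho r_1\pm a^2)$, of area $8r_1a^2$. Thus $\operatorname{area}(P)\lesssim\min(a^3,r_1r_2^2,r_1a^2)$, and combining with the previous display,
\[
\lambda(A\cap B(a)) \lesssim \min(r_1,a)\cdot\min(a^3,r_1r_2^2,r_1a^2).
\]

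Then I would conclude with the elementary bound $\min(r_1,a)\cdot\min(a^3,r_1r_2^2,r_1a^2)\le\min(a^4,r_1^2a^2,r_1^2r_2^2)$: pairing $\min(r_1,a)\le a$ with the factor $a^3$ gives at most $a^4$, pairing $\min(r_1,a)\le r_1$ with $r_1a^2$ gives at most $r_1^2a^2$, and pairing $\min(r_1,a)\le r_1$ with $r_1r_2^2$ gives at most $r_1^2r_2^2$; since the product is dominated by each of the three it is dominated by their minimum, which is the asserted estimate.

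I do not expect a real obstacle here. The only points deserving a moment's care are that the $x$-constraint and the $(y,z)$-constraints of $A$ and of $B(a)$ decouple, so that $\lambda(A\cap B(a))$ is honestly a product and Fubini applies as stated, and that $S_2\cap S_3$ is sheared purely vertically, so that its area is base times the constant vertical thickness $2a^2$ and in particular does not grow with $\rho$.
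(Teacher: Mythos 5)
Your proof is correct and follows essentially the same route as the paper: bound the $yz$-projection by the areas of the three parallelograms $S_1\cap S_2$, $S_3\cap S_4$, $S_2\cap S_3$, multiply by the $x$-extent $\min(4r_1,2a)$, and pair the factors to get the stated minimum. The only difference is that you make the product structure $A\cap B(a)=\bigl([-2r_1,2r_1]\cap[-a,a]\bigr)\times P$ and the Fubini step explicit, which the paper leaves implicit; all the areas and the final pairing agree with the paper's computation.
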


\begin{proof}[Proof of Lemma~\ref{energyestlemma} (continued). The case 
$r_1 \leq r_2$] \let\qed\relax
Let $t \in (0, 4) \setminus \{ 2 \}$. Using \eqref{Ptesteq},
the sublemma and~\eqref{intesteq},
	\begin{align*}
	R_t(p) &\lesssim
	\int_0^\infty
	\min\left(a^4, \, r_1^2a^2, \, r_1^2r_2^2 \right) a^{-(t + 1)} \intd a \\
	&\leq
	\int_0^{r_1} a^{3 - t} \intd a
	+
	\int_{r_1}^{r_2} r_1^2a^{1 - t} \intd a
	+
	\int_{r_2}^\infty r_1^2r_2^2 a^{-(t + 1)} \intd a \\
	&\lesssim
	\max\left(
	r_1^{4 - t}, \, r_1^2 r_2^{2 - t}
	\right).
	\end{align*}
It follows that
	\[
	I_t(R) \lesssim \lambda(R) \cdot
	\max\left(r_1^{4 - t}, \, r_1^2 r_2^{2 - t}	\right)
	=
	\max\left(r_1^{6 - t}r_2^2, \, r_1^4 r_2^{4 - t} \right)
	=
	\begin{cases}
	r_1^4r_2^{4 - t} & \text{if } t \in (0, 2), \\
	r_1^{6 - t}r_2^2 & \text{if } t \in (2, 4).
	\end{cases}
	\]
\end{proof}

\begin{sublemma}
It holds that $\lambda(A \cap B(a)) \lesssim \min\left(
a^4, \, r_2^2a^3 / \rho, \, r_1r_2^2a, \, r_1^2r_2^2 \right)$.

\begin{proof}
The projection of $A \cap B(a)$ to the $yz$-plane is contained in
each of the parallelograms
	\[
	S_1 \cap S_2, \qquad\qquad
	S_3 \cap S_4, \qquad\qquad
	S_2 \cap S_4.
	\]
The first two have area $4a^3$ and $16 r_1r_2^2$, respectively,
and the third has vertices
	\[
	\left(\frac{-2r_2^2 \pm a^2}{2\rho}, -2r_2^2 \right),
	\qquad\qquad
	\left(\frac{2r_2^2 \pm a^2}{2\rho}, 2r_2^2 \right),
	\]
and hence area $4r_2^2a^2 / \rho$. The extension of $A$ in the $x$-direction
is $4r_1$ and the extension of $B(a)$ in the $x$-direction is $2a$.
Thus
	\begin{align*}
	\lambda(A \cap B(a)) &\lesssim
	\min\left(a^3, \, r_1r_2^2, \frac{r_2^2a^2}{\rho}\right) \cdot
	\min\left(r_1, a\right) \\
	&\leq
	\min\left(
	a^4, \, \frac{r_2^2a^3}{\rho}, \, r_1r_2^2a, \, r_1^2r_2^2
	\right). \qedhere
	\end{align*}

\end{proof}
\end{sublemma}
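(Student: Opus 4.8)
The plan is to bound $\lambda(A\cap B(a))$ by the product of the area of its projection to the $yz$-plane and its extent in the $x$-direction, exactly as in the preceding sublemma but selecting a different pair of strips so as to capture the dependence on $\rho$. Recall that $A\cap B(a)$ projects into the intersection $S_1\cap S_2\cap S_3\cap S_4$ of the four strips introduced above, and hence into each of the three parallelograms $S_1\cap S_2$, $S_3\cap S_4$ and $S_2\cap S_4$. The decisive difference from the earlier sublemma is that here one uses $S_2\cap S_4$ (the slanted strip coming from $B(a)$, cut by the vertical bound $|z|\le 2r_2^2$ of $A$) rather than $S_2\cap S_3$; this is the efficient choice in the regime $r_1\ge r_2$, and it is what introduces the factor $1/\rho$.

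First I would compute the three parallelogram areas. The strips $S_1\cap S_2$ and $S_3\cap S_4$ have areas $4a^3$ and $16r_1r_2^2$ just as before. For $S_2\cap S_4$ I would locate the four vertices by intersecting the boundary lines $z=2\rho y\pm a^2$ of $S_2$ with the lines $z=\pm 2r_2^2$ of $S_4$, obtaining $y=(\pm 2r_2^2\pm a^2)/(2\rho)$; the resulting parallelogram has $z$-height $4r_2^2$ and constant $y$-width $a^2/\rho$, hence area $4r_2^2a^2/\rho$. Taking the minimum over the three parallelograms bounds the $yz$-projection area by $\lesssim\min(a^3,\,r_1r_2^2,\,r_2^2a^2/\rho)$.

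Next I would estimate the $x$-extent: since $A$ extends a length $4r_1$ and $B(a)$ a length $2a$ in the $x$-direction, the intersection has $x$-extent $\lesssim\min(r_1,a)$. Multiplying the projection-area bound by this extent gives
	\[
	\lambda(A\cap B(a))\lesssim
	\min\!\left(a^3,\,r_1r_2^2,\,\frac{r_2^2a^2}{\rho}\right)\cdot\min(r_1,a).
	\]
Finally I would distribute the product over the minima: pairing $a^3$ with $a$, $r_2^2a^2/\rho$ with $a$, $r_1r_2^2$ with $a$, and $r_1r_2^2$ with $r_1$ yields the four quantities $a^4$, $r_2^2a^3/\rho$, $r_1r_2^2a$ and $r_1^2r_2^2$ respectively, each of which dominates the full product; hence the product is at most their minimum, which is the claimed bound.

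There is no real obstacle here; the only care needed is in the vertex-and-area computation for $S_2\cap S_4$ and in checking that the chosen pairing of factors reproduces all four target terms. It is worth noting that the new bound $r_2^2a^3/\rho$\,---\,absent from the earlier, $\rho$-free sublemma\,---\,is precisely the ingredient that will later make the $\rho$-integration in $I_t(R)$ converge in the regime $r_1\ge r_2$.
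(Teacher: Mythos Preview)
Your proposal is correct and follows the paper's proof essentially verbatim: the same three parallelograms $S_1\cap S_2$, $S_3\cap S_4$, $S_2\cap S_4$ are used, the same areas $4a^3$, $16r_1r_2^2$, $4r_2^2a^2/\rho$ are computed, and the same product with $\min(r_1,a)$ yields the claimed minimum. Your observation that the key change from the preceding sublemma is replacing $S_2\cap S_3$ by $S_2\cap S_4$ to introduce the $1/\rho$ factor is exactly the point.
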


\begin{proof}[Proof of Lemma~\ref{energyestlemma} (continued). The case 
$r_1 \geq r_2$]
Again, $R_t(p)$ can be estimated using \eqref{Ptesteq},
the sublemma and~\eqref{intesteq}, but the estimate now depends
on $\rho\in [0,r_1]$, the first component of the point $p$. Let
$t \in (0, 4) \setminus\{1, 3\}$. Then for every $\rho$,
	\begin{equation}\label{allrho}
	\begin{split} 
	R_t(p) &\lesssim
	\int_0^\infty
	\min\left(
	a^4, \, \, r_1r_2^2a, \, r_1^2r_2^2
	\right) a^{-(t + 1)} \intd a \\
	&\leq
	\int_0^{\left(r_1r_2^2\right)^{1 / 3}} a^{3 - t} \intd a
	+
	\int_{\left(r_1r_2^2\right)^{1 / 3}}^{r_1}
	r_1r_2^2a^{-t} \intd a
	+
	\int_{r_1}^\infty r_1^2r_2^2 a^{-(t + 1)} \intd a \\
	&\lesssim
	\max\left(
	\left(r_1r_2^2\right)^{(4 - t) / 3}, \,
	r_1^{2 - t} r_2^2
	\right)
	=
	\begin{cases}
	r_1^{2 - t} r_2^2 & \text{if } t \in (0, 1),\\
	\left(r_1r_2^2\right)^{(4 - t) / 3} & \text{if } t \in (1, 4) \setminus
        \{3\}.
	\end{cases}
	\end{split}
	\end{equation}
For $t\in(1,4)\setminus\{3\}$ and $\rho \geq \left(r_2^4 / r_1\right)^{1 / 3}$,
the estimate can be made sharper. For such $\rho$, the interval
$[r_2^2 / \rho, \sqrt{\rho r_1}]$ is non-empty, and when $a$ lies in this
interval the minimum in the expression given by the sublemma is achieved by
the second option. Thus
	\begin{align*}
	R_t(p) &\lesssim
	\int_0^\infty
	\min\left(
	a^4, \, \frac{r_2^2a^3}{\rho}, \, r_1r_2^2a, \, r_1^2r_2^2
	\right) a^{-(t + 1)} \intd a \\
	&\leq
	\int_0^{r_2^2 / \rho} a^{3 - t} \intd a
	+
	\int_{r_2^2 / \rho}^{\sqrt{\rho r_1}}  \frac{r_2^2a^{2 - t}}{\rho} \intd a
	+
	\int_{\sqrt{\rho r_1}}^{r_1} r_1r_2^2a^{-t} \intd a
	+
	\int_{r_1}^{\infty} r_1^2r_2^2a^{-(t + 1)} \intd a \\
	&\lesssim
	\max\left(
	r_2^{2(4 - t)} \rho^{t - 4}, \, r_1^{(3 - t) / 2} r_2^2 \rho^{(1 - t) / 2}, \,
	r_1^{2 - t} r_2^2
	\right) \\
	&\stackrel{*}{=}
	\begin{cases}
	r_1^{(3 - t) / 2} r_2^2 \rho^{(1 - t) / 2} & \text{if } t \in (1, 3), \\
	r_2^{2(4 - t)} \rho^{t - 4} & \text{if } t \in (3, 4).
	\end{cases}
	\end{align*}
Denoting the options in the maximum by $O_1, O_2, O_3$,
	\[
	\frac{O_1^2}{O_2^2} = \left(\frac{r_1\rho^3}{r_2^4}\right)^{t - 3},
	\qquad\qquad
	\frac{O_2^2}{O_3^2} = \left(\frac{r_1}{\rho}\right)^{t - 1},
	\]
and the equality $*$ follows using that the expressions in parentheses
are greater than or equal to $1$.

Let $g_t(\rho) = \sup_{z_0} R_t(p)$ where $p = (\rho, 0, z_0)$.
Then
	\[
	I_t(R) \lesssim r_2^2 \int_0^{r_1} g_t(\rho) \rho \intd\rho.
	\]
For $t \in (0, 1)$, the estimate~\eqref{allrho} gives
	\[
	I_t(R) \lesssim
	r_2^2 \int_0^{r_1} r_1^{2 - t} r_2^2 \rho \intd\rho
	\approx
	r_1^{4 - t} r_2^4.
	\]
For $t \in (1, 4) \setminus \{3\}$, the first part of the integral
$\int_0^{r_1} g_t(\rho) \rho \intd\rho$ is again estimated
using~\eqref{allrho}. Let
	\[
	I_0 = r_2^2 \int_0^{\left(r_2^4 / r_1\right)^{1 / 3}}
	\left(r_1r_2^2\right)^{(4 - t) / 3}
	\rho\intd\rho
	\approx
	r_1^{(2 - t) / 3} r_2^{2(11 - t) / 3}.
	\]
Then for $t \in (1, 3)$,
	\[
	I_t(R) \lesssim
	I_0
	+
	r_2^2 \int_0^{r_1}
	r_1^{(3 - t) / 2} r_2^2 \rho^{(3 - t) / 2} \intd\rho
	\approx
	I_0 + r_1^{4 - t}r_2^4
	\approx
	r_1^{4 - t}r_2^4,
	\]
using that
$I_0 \approx r_1^{(2 - t) / 3} r_2^{2(5 - t) / 3} r_2^4 \leq r_1^{4 - t}r_2^4$.
For $t \in (3, 4)$,
	\[
	I_t(R) \lesssim
	I_0
	+
	r_2^2 \int_0^{r_1} r_2^{2(4 - t)} \rho^{t - 3} \intd\rho
	\approx
	I_0 + r_1^{t - 2} r_2^{2(5 - t)}
	\approx r_1^{t - 2} r_2^{2(5 - t)},
	\]
using that
$I_0 \approx r_1^{(2 - t) / 3} r_2^{4(t - 2) / 3} r_2^{2(5 - t)} \leq
r_1^{t - 2} r_2^{2(5 - t)}$.
This completes the proof of Lemma~\ref{energyestlemma}.
\end{proof}

Combining Lemmas~\ref{contentestlemma} and \ref{energyestlemma} 
proves Proposition~\ref{phiestimate}.
  
\section{Proof of Theorem~\ref{mainthm}}

It is clear that $\dimh E_{\seq r}(\omega) \leq \dimh \heis = 4$.
Let $t \in (0, 4) \setminus \{1, 2, 3\}$.

For every $\omega$ and $n_0$,
	\[
	E_{\seq r}(\omega) \subset \bigcup_{n = n_0}^\infty
	\crect{\omega_n}{r_n},
	\]
and by Lemma~\ref{contentestlemma},
$\hmeas_\infty^t(\crect{\omega_n}{r_n}) \lesssim \Phi^t(r_n)$. 
Thus for every $n_0$
	\[
	\hmeas_\infty^t(E_{\seq r}(\omega)) \lesssim
	\sum_{n = n_0}^{\infty} \Phi^t(r_n).
	\]
It follows that if $\sum_n \Phi^t(r_n) < \infty$ then
$\hmeas_\infty^t(E_{\seq r}(\omega)) = 0$ so that
$\dimh E_{\seq r}(\omega) \leq t$.

By Lemma~\ref{energyestlemma},
$\Cap_t(\crect{0}{r_n}) \gtrsim \Phi^t(r_n)$.
Thus if $\sum_n \Phi^t(r_n) = \infty$ then Theorem~\ref{auxthm}
with $\lambda$ replaced by $\lambda(W)^{-1}\lambda$ implies
that $\dimh E_{\seq r}(\omega) \geq t$ for almost every $\omega$.

\section*{Acknowledgements}
We gratefully acknowledge the support of the Centre of Excellence in Analysis
and Dynamics Research, funded by the Academy of Finland, and the hospitality of
Institut Mittag--Leffler where part of this work was carried out. We also thank
the referee for useful comments.


\begin{thebibliography}{28}

\bibitem{BBMT10}
Balogh, Z., Berger, R., Monti, R., and Tyson, J.,
\emph{Exceptional sets for self-similar fractals in Carnot groups},
Math. Proc. Cambridge Philos. Soc. 149 (2010), no. 1, 147--172.

\bibitem{BD-CFMT13}
Balogh, Z., Durand-Cartagena, E., F\"assler, K., Mattila, P., and Tyson, J.,
\emph{The effect of projections on dimension in the Heisenberg group},
Rev. Mat. Iberoam. 29 (2013), no. 2, 381--432.

\bibitem{BFMT12}
Balogh, Z., F\"assler, K., Mattila, P., and Tyson, J.,
\emph{Projection and slicing theorems in Heisenberg groups},
Adv. Math. 231 (2012), no. 2, 569--604.

\bibitem{BaloghRicklyCassano03}
Balogh, Z., Rickly, M., and Serra Cassano, F.,
\emph{Comparison of Hausdorff measures with respect to the Euclidean and the
Heisenberg metric}, Publ. Mat. 47 (2003), no. 1, 237--259.

\bibitem{BaloghTyson05}
Balogh, Z., and Tyson, J.,
\emph{Hausdorff dimensions of self-similar and self-affine fractals in the
Heisenberg group}, Proc. London Math. Soc. (3) 91 (2005), no. 1, 153--183.

\bibitem{BaloghTysonWarhurst09}
Balogh, Z., Tyson, J., and Warhurst, B.,
\emph{Sub-Riemannian vs. Euclidean dimension comparison and fractal geometry on
Carnot groups}, Adv. Math. 220 (2009), no. 2, 560--619.

\bibitem{BaloghTysonWildrick17}
Balogh, Z., Tyson, J., and Wildrick, K.,
\emph{Frequency of Sobolev dimension distortion of horizontal subgroups in
Heisenberg groups},
Ann. Sc. Norm. Super. Pisa Cl. Sci. (5) 17 (2017), no. 2, 655--683.

\bibitem{Besicovitch34}
Besicovitch, A.,
\emph{On the sum of digits of real numbers represented in the dyadic system},
Ann. Math. 110 (1935), no. 1, 321--330.

\bibitem{Borel97}
Borel, E.,
\emph{Sur les s\'eries de Taylor}, Acta Math. 21 (1897), no. 1, 243--247.

\bibitem{Cantelli17}
Cantelli, F.,
\emph{Sulla probabilit\'a come limite della frequenza},
Atti Accad. Naz. Lincei 26 (1917), no. 1, 39--45.

\bibitem{ChousionisTysonUrbanski19}
Chousionis, V., Tyson, J., and Urba\'nski, M.,
\emph{Conformal graph directed Markov systems on Carnot groups},
Mem. Amer. Math. Soc., to appear, available in arXiv:1605.01127. 

\bibitem{Durand10}
Durand, A.,
\emph{On randomly placed arcs on the circle},
in ``Recent developments in fractals and related fields'',
Appl. Numer. Harmon. Anal., Birkh\"auser Boston Inc., 2010, pp. 343--351.

\bibitem{Eggleston49}
Eggleston, H.,
\emph{The fractional dimension of a set defined by decimal properties},
Quart. J. Math. Oxford 20 (1949), 31--36.

\bibitem{EJJS18}
Ekstr\"om, F., J\"arvenp\"a\"a, E., J\"arvenp\"a\"a, M., and Suomala, V.,
\emph{Hausdorff dimension of limsup sets of random rectangles in products of
regular spaces},
Proc. Amer. Math. Soc. 146 (2018), no. 6, 2509--2521.

\bibitem{EkstromPerssonUP}
Ekstr\"om, F., and Persson, T.,
\emph{Hausdorff dimension of random limsup sets}, J. London Math. Soc. 98
(2018), no. 2, 661--686.

\bibitem{Falconer94}
Falconer, K.,
\emph{Sets with large intersection properties},
J. London Math. Soc. (2) 49 (1994), no. 2, 267--280.

\bibitem{FanSchmelingTroubetzkoy13}
Fan, A.-H., Schmeling, J., and Troubetzkoy, S.,
\emph{A multifractal mass transference principle for Gibbs measures with
applications to dynamical Diophantine approximation},
Proc. Lond. Math. Soc. 107 (2013), no. 5, 1173--1219.

\bibitem{FanWu04}
Fan, A.-H., and Wu, J.,
\emph{On the covering by small random intervals},
Ann. Inst. Henri Poincar\'e Probab. Stat. 40 (2004), no. 1, 125--131.

\bibitem{FasslerHovila16}
F\"assler, K., and Hovila, R.,
\emph{Improved Hausdorff dimension estimate for vertical projections in the
Heisenberg group},
Ann. Sc. Norm. Super. Pisa Cl. Sci. (5) 15 (2016), no. 1, 459--483.

\bibitem{FJJSUP}
Feng, D.-J., J\"arvenp\"a\"a, E., J\"arvenp\"a\"a, M., and Suomala, V.,
\emph{Dimensions of random covering sets in Riemann manifolds},
Ann. Probab. 46 (2018), no. 3, 1542--1596.

\bibitem{Hovila14}
Hovila, R.,
\emph{Transversality of isotropic projections, unrectifiability, and Heisenberg
groups},
Rev. Mat. Iberoam. 30 (2014), no. 2, 463--476.

\bibitem{Jarnik32}
Jarn\'\i k, V.,
\emph{Zur theorie der diophantischen approximationen},
Monatsh. Math. Phys. 39 (1932), no. 1, 403--438.

\bibitem{JJKLS14}
J\"arvenp\"a\"a, E., J\"arvenp\"a\"a, M., Koivusalo, H., Li, B., and
Suomala, V.,
\emph{Hausdorff dimension of affine random covering sets in torus},
Ann. Inst. Henri Poincar\'e Probab. Stat. 50 (2014), no. 4, 1371--1384.

\bibitem{JJKLSX17}
J\"arvenp\"a\"a, E., J\"arvenp\"a\"a, M., Koivusalo, H., Li, B., Suomala, V.,
and Xiao, Y.,
\emph{Hitting probabilities of random covering sets in tori and metric spaces},
Electron. J. Probab. 22 (2017), no. 1, 1--18.

\bibitem{Khintchine26}
Khintchine, A.,
\emph{Zur metrischen theorie der diophantischen approximationen},
Math. Z. 24 (1926), no. 1, 706--714.

\bibitem{Persson15}
Persson, T.,
\emph{A note on random coverings of tori},
Bull. Lond. Math. Soc. 47 (2015), no. 1, 7--12.

\bibitem{PerssonUP}
Persson, T.,
\emph{Inhomogeneous potentials, Hausdorff dimension and shrinking targets},
Ann. Henri Lebesgue, to appear.

\bibitem{PerssonReeve15}
Persson, T., and Reeve, H.,
\emph{A Frostman-type lemma for sets with large intersections, and an
application to Diophantine approximation},
Proc. Edinb. Math. Soc. (2) 58 (2015), no. 2, 521--542.

\bibitem{SeuretUP}
Seuret, S.,
\emph{Inhomogeneous random coverings of topological Markov shifts},
Math. Proc. Cambridge Philos. Soc. 165 (2018), no. 2, 341--357.

\bibitem{SeuretVigneron17}
Seuret, S., and Vigneron, F.,
\emph{Multifractal analysis of functions on Heisenberg and Carnot groups},
J. Inst. Math. Jussieu 16 (2017), no. 1, 1--38.

\bibitem{Vandehey16}
Vandehey, J.,
\emph{Diophantine properties of continued fractions on the Heisenberg group},
Int. J. Number Theory 12 (2016), no. 2, 541--560.

\bibitem{ZhengUP}
Zheng, C.,
\emph{A shrinking target problem with target at infinity in rank one
homogeneous spaces}, arXiv:1610.01870v3 (2016).  


\end{thebibliography}
\end{document}